\newtheorem{Thm}{Theorem}[section]
\newtheorem{Lem}[Thm]{Lemma}
\newtheorem{Prop}[Thm]{Proposition}
   \newtheorem{prop}[Thm] {Proposition}     
   \newtheorem{lemma} [Thm]{Lemma}
\theoremstyle{definition}
\newtheorem{defi}[Thm]{Definition}
     \newtheorem{example}[Thm]{Example}
\newtheorem{rem}[Thm]{Remark}
\newtheorem{remark}[Thm]{Remark}
\newcommand{\RR}{{\mathbb{R}}}
\newcommand{\ZZ}{{\mathbb{Z}}}
\newcommand{\cC}{{\mathcal C}}
\newcommand{\cD}{{\mathcal D}}
\newcommand{\cO}{{\mathcal O}}
\newcommand{\cS}{{\mathcal S}}
\newcommand{\cX}{{\mathcal X}}
 \newcommand{\mdeg}{{\underline{\deg}}}
\def\<{\langle}
\def\>{\rangle}
\newcommand{\oP}{\overline{P}}
\newcommand{\Spec}{\operatorname{Spec}}
\newcommand{\Pic}{{\operatorname{{Pic}}}}
\newcommand{\Div}{{\operatorname{Div}}}
\newcommand{\dv}{{\operatorname{div}}}
\newcommand{\Prin}{{\operatorname{Prin}}}
\newcommand{\Aut}{{\operatorname{Aut}}}
\newcommand{\trop}{{\operatorname{trop}}}
\newcommand{\an}{{\operatorname{an}}}
\newcommand{\val}{{\operatorname{val}}}
\newcommand{\ord}{\operatorname{ord}}
\newcommand{\double}{\genfrac..{0pt}1
{\raise -2pt\hbox{$\scriptstyle\longrightarrow$}}{\raise 4pt\hbox
{$\scriptstyle\longrightarrow$}}} 
\newcommand{\ssm}{\smallsetminus}
 \newcommand{\la}{\longrightarrow}
\newcommand{\ha}{\hookrightarrow}
\newcommand{\ov}{\overline}
\newcommand{\mc}{\mathcal}
 \newcommand{\Mgbst}{{\ov{\mathcal{M}}_g}}
  \newcommand{\Sgbst}{{\ov{\mathcal{S}}_g}}
\newcommand{\Mgt}{{M_{g}^{\rm trop}}}
\newcommand{\Sgt}{{S_{g}^{\rm trop}}}
 \newcommand{\Sg}{{\mathcal {G}}_g}
\newcommand{\mb}{\mathbb}
\newcommand{\ol}{\overline}
\newcommand{\wh}{\widehat}
\newcommand{\TC}{\Gamma=(G,w,\ell)}
\newcommand{\col}{\colon}
\newcommand{\ra}{\rightarrow}
\newcommand{\RtG}{R_{\Gamma}^\trop}
\newcommand{\TtG}{{T_{\Gamma}^{\rm trop}}}
\newcommand{\Ttg}{{T_g^{\rm trop}}}
\newcommand{\tD}{\tau_{\Div}} 
\newcommand{\tP}{\tau_{\Pic}}
\newcommand{\tS}{\tau_{\cS}}
\newcommand{\nS}{\alpha}
 \newcommand{\cSK}{\cS_{X_K}}
\begin{document}

\setcounter{tocdepth}{1}

 \title{Theta-characteristics on tropical curves}
 
\author[]{Lucia Caporaso, Margarida Melo, and Marco Pacini}

\address[Caporaso]{Dipartimento di Matematica e Fisica\\ Universit\`{a} Roma Tre \\ Largo San Leonardo Murialdo \\I-00146 Roma\\  Italy }\email{caporaso@mat.uniroma3.it}
 \address[Melo]{Dipartimento di Matematica e Fisica\\ Universit\`{a} Roma Tre \\ Largo San Leonardo Murialdo \\I-00146 Roma\\  Italy}
 \email{melo@mat.uniroma3.it}
\address[Pacini]{Instituto de Matem\'atica, Universidade Federal Fluminense  \\ Campus do Gragoat\'a \\ 24.210-201 Niter\'oi, Rio de Janeiro, Brazil}\email{pacini.uff@gmail.com, pacini@impa.br}

\begin{abstract}
We give an explicit description of   theta-characteristics on tropical curves  and characterize the   effective ones. We    construct the moduli space, $T_g^\trop$, for tropical theta-characteristics of genus $g$ as a generalized cone complex. We describe  the fibers of the specialization map from the moduli space $\mc S_g$, of theta-characteristics on algebraic curves,     to $T_g^\trop$.
\end{abstract}



\thanks{The first and second authors were supported by funds from MIUR via the Excellence Department Project awarded to the Department of Mathematics and Physics of Roma Tre and by the project  PRIN2017SSNZAW: Advances in Moduli Theory and Birational Classification. The second author is a member of the Centre for Mathematics of the University of
Coimbra -- UIDB/00324/2020), funded by the Portuguese Government through FCT/MCTES.
The third author was supported by CNPq-PQ, 301671/2019-2.}

\maketitle

 \noindent MSC (2020): 14H10, 14H40, 14T20.

 \noindent Keywords: Moduli space,  theta-characteristic, tropical curve, algebraic curve.

\section{Introduction}

Let $X$ be a smooth algebraic curve of genus $g$ over an algebraically closed field of characteristic different from $2$. 
A theta-characteristic on $X$ is a line bundle of degree $g-1$ on $X$ such that $L^{\otimes 2}\cong K_X$, where $K_X$ is the canonical line bundle of $X$. Theta-characteristics on algebraic curves have been extensively studied over the years because of their remarkable geometry.  In particular, there is a notion of parity on theta-characteristics (given by the parity of the dimension of its space of global sections) which is  deformation invariant (see \cite{mumford}). 
Each algebraic curve admits  $2^{2g}$ theta-characteristics, $(2^g+1)2^{g-1}$ of which are even and $(2^g-1)2^{g-1}$ of which are odd.
The moduli space $\mc S_g$ of theta-characteristics on curves of genus $g$ splits in two connected components, according to the parity.

Let now $\Gamma$ be a tropical curve of genus $g$ and let $K_\Gamma$ be the canonical divisor on $\Gamma$. 
A tropical theta-characteristic on $\Gamma$ is a divisor class $[T]\in\Pic^{g-1}(\Gamma)$ such that $[2T]=[K_\Gamma]$.
Tropical theta-characteristics on pure tropical curves have been studied by Zharkov in \cite{zharkov} and shown to be associated to orientations given by the choice of a cyclic subcurve of $\Gamma$. There are therefore $2^g$ theta-characteristics on a pure tropical curve of genus $g$, and among these exactly one is non-effective.

In this paper, we study tropical theta-characteristics on any tropical curve (not necessarily pure). In Proposition~\ref{DPS} we generalize Zharkov's results by exhibiting explicit representatives for tropical theta-characteristics obtained from orientations associated to suitable flows on the curve and classify the ones that are effective  (see Theorem \ref{thm:rank}).
Our choice of representatives is different from Zharkov's, and it is inspired by the geometry of the moduli space of stable spin curves $\Sgbst$ defined by Cornalba in \cite{C89}.
Our representatives are well behaved with respect to specialization, and we are able to   construct, in Theorem \ref{Tgthm}, a moduli space, $T_g^\trop$, for tropical theta-characteristics   as a  generalized cone complexes.

  Unlike the situation for classical algebraic curves, there is no natural notion of parity on tropical theta-characteristics.
 In our paper \cite{CMP1}, we introduced the notion of tropical spin curve, which is a tropical curve together with a spin structure, i.e., a theta-characteristic together with a suitable sign function, which  encodes the notion of parity. Theta-characteristics on tropical curves are obtained from spin structures simply by  forgetting the sign function.
In loc. cit, we constructed a moduli space $S_g^\trop$ for tropical spin curves of given genus and we showed that this space is related with Cornalba's   moduli space    via Berkovich analytification. 
There is a natural   morphism of generalized cone complexes, $S_g^\trop\la M_g^\trop$,   to the moduli space of tropical curves, $ M_g^\trop$. From the results of Section 4 we obtain    a natural factorization  $$S_g^\trop\la T_g^\trop\la M_g^\trop$$
via morphisms of generalized cone complexes.

Let now $X_K$ be a smooth algebraic curve of genus $g$ over a non-Archimedian algebraically closed   field  $K$ of characteristic different from $2$, and let $\Gamma$ be the genus $g$ tropical curve which is the skeleton of $X_K$. There is  a specialization map  $\tau\colon \Pic(X_K)\ra\Pic(\Gamma)$, which is well-known to be  a homomorphism.
 Let $S_{X_K}$ be the set of theta-characteristics on $X_K$  and let $T_\Gamma^\trop$ be the set of (tropical) theta-characteristics on $\Gamma$. Since $\tau$ is a homomorphism, it restricts to a map  $S_{X_K}\to T_\Gamma^\trop$.
 It is   quite interesting to understand  the fibers of the specialization map over a given tropical theta-characteristic of $\Gamma$. This problem has been considered by various authors: in the case of tropical plane quartics by  Baker, Len, Morrison, Pflueger and Ren  in \cite{BLMPR}, Chan, Jiradilok in \cite{CJ} and Len, Markwig in \cite{LM20}, for hyperelliptic curves by Panizzut in \cite{panizzut}
and by Jensen and Len in \cite{JL} in the case of pure tropical curves.
  Jensen and Len showed that the specialization map $S_{X_K}\to  T_{\Gamma}^\trop$ is surjective. Moreover, there are $2^g$ even theta-characteristics specializing to the non-effective tropical theta-characteristic, and each effective theta-characteristic is the image of $2^{g-1}$ even theta-characteristics and $2^{g-1}$ odd theta-characteristics.
Jensen and Len's result follows by carefully analyzing the Weil pairing on the set of $2$-torsion points in the Jacobian of smooth curves.

In the second part of this paper we propose a different approach to study the specialization map for theta-characteristics, based on the description of tropical theta-characteristics given in the first part. Our results rely on our approach to tropical theta-characteristics in \cite{CMP1}, which allows us to give an interpretation to the specialization map  using the combinatorial description of the boundary of Cornalba's moduli space $\overline{\mathcal S_g}$.
In Theorem \ref{thm:main2} we use this approach to obtain a generalization of Jensen and Len's results for any tropical curve (not necessarily pure).   We point out that by using the results in \cite{CMP1} the proof of Theorem \ref{thm:main2} is quite simple compared to the previous approaches.

\section{Preliminaries}

\subsection{Graphs} 
In this paper we denote by  $G=(V,E)$   a connected graph. 
Given a vertex $v\in V$, we write  $\deg_G(v)$, sometimes $\deg(v)$, for the number of edges of $G$ incident to $v$, with loops counting twice. Sometimes we will use the notation $E(G)$ and $V(G)$ for the set of edges and vertices of $G$.

Given a subset $R\subset E$, we will frequently abuse the notation by writing $R$ to denote the subgraph of $G$ generated by $R$. For $R\subset E$, the \emph{$R$-subdivision} (or simply \emph{subdivision}) of $G$, written $\wh G_R$, is the graph obtained from $G$ by inserting exactly one vertex in the interior of each edge $e\in R$.

An \emph{orientation} $O$ on $G$ is a pair $O=(\sigma,\tau)$ where $\sigma,\tau\col E \ra V $, called {\it source} and {\it target} map, are such that $\{\sigma(e),\tau(e)\}=\{u_e,v_e\}$, where we write $ u_e,v_e $ for the ends of $e$. 
  A \emph{sub-orientation} on $G$ is an orientation on some subdivision of $G$.

Given an orientation $O$ on $G$,   if $\wh G$ is the $R$-subdivision of $G$ for $R\subset E$,  we have a natural orientation induced on $\wh G$ (i.e. a sub-orientation on $G$), which we   call, again, $O$. 
More precisely, let $e_0$, $e_1$ be the edges of $\wh G$ obtained after inserting one vertex  in the interior of $e\in R$.
If $e_0$ is incident to $\sigma(e)$ and $e_1$ to $\tau(e)$, 
then we set $\sigma(e_0)=\sigma(e)$ and $\tau(e_1)=\tau(e)$, as in the picture.
  \begin{figure}[h]
\begin{equation*}
\xymatrix@=.5pc{
 &&&&*{\bullet}\ar[rrrrrr]^{e}_<{\sigma(e)} _>{\tau(e)} &&&&&&*{\bullet} 
  &&&&&&&&&*{\bullet}\ar[rrr]^{e_0}_<{\sigma(e_0)}&&&*{\bullet} \ar[rrr]^{e_1} _>{\tau(e_1)}  &&&*{\bullet}   &&&&\\
}
\end{equation*}
\end{figure}

A \emph{(vertex weighted)} graph $(G,w)$ is a (connected) graph $G$ endowed with a function $w\col V(G)\ra \mathbb Z_{\ge0}$. 
The \emph{genus} of $(G,w)$ is 
\[
g=\sum_{v\in V} w(v)+b_1(G)=\sum_{v\in V} w(v)+|E|-|V|+1.
\]
 We say that $(G,w)$ is \emph{stable} if $2w(v)-2+\deg_G(v)>0$, for every $v\in V$.  

We denote by $\mc E_G$, respectively  $\mc V_G$, the vector space over $\mb F_2$ spanned by $E$, respectively  by $V$.  We consider the linear map $\partial\col \mc E_G\ra \mc V_G$ such that for every subset $S\subset E$,
\[
\partial\left(\sum_{e\in S} e\right)=
\sum_{e\in S} (u_e+v_e).
\]
We denote by $\cC_G$ the kernel of $\partial$, and we call an element of $\cC_G$ \emph{cyclic} or  a \emph{cyclic subgraph} of $G$. 
It is well known that $\cC_G$ is generated by the {\it cycles} of $G$, (i.e. by the connected subgraphs   all of whose vertices have degree $2$.)

\subsection{Tropical curves} 

A (connected) \emph{tropical curve}  is a triple $\Gamma=(G,w,\ell)$, where $(G,w)$ is a stable   graph and $\ell$   a length function, $\ell \col E \ra \mathbb R_{>0}$.
We call $(G,w)$ the combinatorial type of $\Gamma$. The \emph{genus} of    $\Gamma=(G,w,\ell)$ is the genus of  $ (G,w)$, usually denoted by $g$.

We   view $\Gamma$ as a metric space   as follows.
Every  edge $e$ is identified with a closed interval   of length $\ell(e)$ if $e$ is not a loop,
and with
a circle of length $\ell(e)$ if $e$ is a loop.
Then every path in $\Gamma$ has a well defined length.
Now given two points $p,q\in \Gamma$ we define the distance between them, $d(p,q)$,  as the shortest length of a path in $\Gamma$
from $p$ to $q$. 

For any edge $e$ of $G$, we let $p_e\in \Gamma$ be the mid-point of $e$.

We extend the weight function   to all points of $\Gamma$ as follows
 $$
w_\Gamma(p)=
\begin{cases}
\begin{array}{ll}
w(p)\ & \text{ if } p\in V \\
0, &  \text{otherwise}.
\end{array}
\end{cases}
$$
Similarly, we set
 $$
\deg_\Gamma(p)=
\begin{cases}
\begin{array}{ll}
\deg_G(p)\ & \text{ if } p\in V \\
2, &  \text{otherwise}.
\end{array}
\end{cases}
$$
We  write $w=w_{\Gamma}$ and $\deg =\deg_\Gamma$ when no confusion is likely.

A \emph{divisor} on $\Gamma$ is a formal sum $D=\sum_{p\in \Gamma} D(p)p$, with $D(p)\in\mathbb Z$, where $D(p)$ is nonzero only for a finite set of points of $\Gamma$. We write $D\geq 0$ if $D(p)\geq 0$ for all $p\in \Gamma$, and we say $D$ is {\emph  {effective}}.
The degree of $D$ is the integer $\sum_{p\in \Gamma} D(p)$. We let $\Div(\Gamma)$ be the group of divisors on $\Gamma$ and $\Div^d(\Gamma)\subset \Div(\Gamma)$ the subset of divisors of degree $d$.  

A \emph{rational function} $f$ on $\Gamma$ is a continuous piecewise linear function $f\col\Gamma\ra \mathbb R$ with integral slopes and finitely many pieces. 
The points of $\Gamma$ where $f$ is not linear will be called {\it critical} points of $f$.
Given a rational function $f$ on $\Gamma$, for every $p\in \Gamma$, we let 
$\ord_p(f)$ be the sum of the slopes of $f$ outgoing $p$ (there are $\deg_{\Gamma}(p)$ of them). If $f$ is linear at $p$  then $\ord_p(f)=0$, therefore  the following
 
\[
\dv(f):=\sum_{p\in \Gamma} \ord_p(f)p  
\]
is a finite sum,   hence a divisor, and it is easy to see that   $\dv(f)\in \Div^0(\Gamma)$. 
The divisors of the form $\dv(f)$ are called
  \emph{principal divisors}. The set of all principal divisors is a subgroup of $\Div(\Gamma)$ denoted by
 $\Prin(\Gamma)$.
 The \emph{Picard group} of $\Gamma$ is  
\[
\Pic(\Gamma):=\Div(\Gamma)/\Prin(\Gamma),
\]
and $\Pic^d(\Gamma)\subset \Pic(\Gamma)$ is the  set of divisor classes  of degree $d$. For   $D\in \Div(\Gamma)$  we denote by $[D]\in\Pic(\Gamma)$ its class and we write $D\sim D'$ if $[D]=[D']$. 

\begin{example}
\label{fe}
 Fix a non-loop edge $e$ of $\Gamma$ and  let $u_e, v_e$ be its ends. Let us  identify $e$  with the interval
 $[-\ell(e)/2,\ell(e)/2]$ so that  its mid-point, $p_e$, gets identified with $0$.
 Now consider the   rational function $f_e$ on $\Gamma$ defined as follows.
  $$
f_e(x)=
\begin{cases}
\begin{array}{ll}
\ell(e)/2+x\ & \text{ if } x\in [-\ell(e)/2,0]  \\
\ell(e)/2-x\ & \text{ if } x\in [0,  \ell(e)/2]  \\
0 &  \text{otherwise, i.e. if } x\not\in e.
\end{array}
\end{cases}
$$
Then
$$
\dv(f_e)=u_e+v_e-2p_e.
$$
If $e$ is a loop based at a vertex $v_e\in V$,  we identify $e$ with the circle obtained from the interval $[-\ell(e)/2,\ell(e)/2]$  by identifying its ends.
Then $f_e$ defines again a rational function, and we have 
$ 
\dv(f_e)=2v_e-2p_e.
$ 
\end{example}
From the previous example we highlight the following basic relation:
\begin{equation}
\label{f_e} 
 u_e+v_e \sim 2p_e 
\end{equation}
  for any edge $e$ of $\Gamma$ with (possibly equal) ends $u_e,v_e$.

Let $O=(\sigma,\tau)$ be a sub-orientation on $G$ supported on the    subdivision $\wh G$. 
For $p\in \Gamma$ we denote by $\deg_{O}^{-}(p)$ the number of edges directed towards   $p$, i.e.
\[
\deg_{O}^{-}(p)=
\begin{cases}
\begin{array}{ll}
|\{e\in E(\wh G) : \tau(e)=p\}|, & \text{ if } p\in V(\wh G);\\
1, &  \text{otherwise}.
\end{array}
\end{cases}
\]
We define the following divisor on $\Gamma$:
\[
D_{O}^-:=\sum_{p\in \Gamma}(\deg^-_O(p)-1+w(p))p.
\]
The divisor $D_{O}^-$ has degree $g-1$ and is supported on some subset of $V(\wh G)$.

\section{Theta-characteristics on tropical curves}
\subsection{Tropical square roots and theta-characteristics}
 The canonical divisor of a  tropical curve $\Gamma=(G,w,\ell)$    is 
\[
K_\Gamma:=\sum_{p\in \Gamma} (2w (p)-2+\deg (p)) p =\sum_{v\in V} (2w (v)-2+\deg (v)) v 
\]
The divisor $K_\Gamma$ has degree $2g-2$, with  $g$ the genus of $\Gamma$.

\begin{defi}
A \emph{(tropical) square  root (of zero)} of $\Gamma$ is a divisor class $[D]\in \Pic(\Gamma)$ such that $[2D]=0$.
The set of square roots   of $\Gamma$ is denoted  by $\RtG$.

A \emph{(tropical) theta-characteristic} of $\Gamma$  is a divisor class $[D]\in \Pic(\Gamma)$ such that $[2D]=[K_\Gamma]$. 
The set of  theta-characteristics    of $\Gamma$ is denoted  by $\TtG$.
\end{defi}
Obviously, $\RtG\subset \Pic^0(\Gamma)$ and $\TtG\subset \Pic^{g-1}(\Gamma)$.

Our next goal is to give an explicit description of all square roots and all theta-characteristics.

Let  $P\in \cC_G$, then $\deg_P(v)$ is even for every $v\in V(G)$. We can thus define the following  divisor on $\Gamma$
\begin{equation}
 \label{F_P}
F_P:=\sum_{v\in V}\frac{\deg_P(v)}{2}v-\sum_{e\in P} p_e. 
\end{equation}
We have, using Example~\ref{fe},
$$
2F_P=\sum_{v\in V} \deg_P(v) v-2\sum_{e\in P} p_e = \sum_{e\in P} (v_e+u_e-2p_e)=\sum_{e\in P}  \dv(f_e).
$$
Therefore $2F_P$ is a principal divisor, in other words, $[F_P]$ is a square root (of zero).
Let us show that, as $P$ varies in $\cC_G$, 
the classes of the divisors $F_P$ are distinct  and give all the square roots of $\Gamma$.

\begin{Prop}
\label{distinct} Let $\Gamma=(G,w,\ell)$ be a tropical curve. Then 
the following is a bijection
$$
\cC_G\la \RtG;\quad \quad P\mapsto [F_P]
$$
\end{Prop}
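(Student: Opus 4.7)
The plan is to realize $P\mapsto[F_P]$ as an injective $\mathbb{F}_2$-linear map between two groups of the same finite cardinality.

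First I would check that $P\mapsto[F_P]$ is an $\mathbb{F}_2$-linear map into $\RtG$. The inclusion $[F_P]\in\RtG$ is the computation just before the statement, which gives $2F_P=\sum_{e\in P}\dv(f_e)\in\Prin(\Gamma)$. For linearity, using $\deg_P(v)+\deg_{P'}(v)-\deg_{P+P'}(v)=2\deg_{P\cap P'}(v)$ (with $P+P'$ the $\mathbb{F}_2$-sum, i.e.\ the symmetric difference) together with \eqref{f_e}, a direct calculation yields
\[
F_P+F_{P'}-F_{P+P'}=\sum_v\deg_{P\cap P'}(v)\,v-2\sum_{e\in P\cap P'}p_e=\sum_{e\in P\cap P'}\dv(f_e)\in\Prin(\Gamma),
\]
so $[F_{P+P'}]=[F_P]+[F_{P'}]$ in $\Pic(\Gamma)$. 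Note that we only use $P\cap P'$ as a set of edges; it need not be cyclic.

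The main technical step is injectivity, which I would prove by a parity-of-slopes argument. Suppose $F_P=\dv(h)$ for some rational function $h$, and set $g_P:=\sum_{e\in P}f_e$. Then $\dv(g_P)=2F_P=\dv(2h)$, so $g_P-2h$ has trivial divisor and is therefore constant on the connected curve $\Gamma$ (as a rational function on a connected tropical curve with zero divisor must be constant, by looking at a point where it attains its maximum). Consequently every slope of $g_P$ is even. However, on the interior of any edge $e\in P$ only the term $f_e$ contributes to $g_P$, since each $f_{e'}$ vanishes off $e'$, and $f_e$ has slope $\pm 1$ on each of the two halves of $e$. This contradiction forces $P=\emptyset$.

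Finally, for surjectivity I would compare cardinalities. Since $G$ is connected, $\partial\col\mc E_G\ra\mc V_G$ has rank $|V|-1$, so $|\cC_G|=2^{b_1(G)}$; on the other hand, the tropical Jacobian $\Pic^0(\Gamma)$ is a real torus of dimension $b_1(G)$, hence $|\RtG|=|\Pic^0(\Gamma)[2]|=2^{b_1(G)}$ as well. An injective homomorphism between finite groups of equal order is a bijection, concluding the proof.
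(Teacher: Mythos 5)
Your proof is correct, and while its overall skeleton matches the paper's (establish $\mathbb{F}_2$-linearity via the identity $F_P+F_{P'}-F_{P+P'}=\sum_{e\in P\cap P'}\dv(f_e)$, compare cardinalities $|\cC_G|=|\RtG|=2^{b_1(G)}$ using $\Pic^0(\Gamma)\cong H_1(G,\RR)/H_1(G,\ZZ)$, and thereby reduce everything to showing $[F_P]\neq 0$ for $P\neq 0$), your key injectivity step is genuinely different. The paper argues directly on a hypothetical $f$ with $\dv(f)=F_P$: it analyzes the maximum locus $\Gamma_{\max}$, shows its boundary must sit inside the mid-points $\{p_e,\ e\in P\}$ (the only points where $F_P<0$), rules out vertices of $P$ lying in $\Gamma_{\max}$, and derives a contradiction from $\dv(f)(p_e)\leq -2$ versus $F_P(p_e)=-1$. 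You instead set $g_P=\sum_{e\in P}f_e$, note $\dv(g_P)=2F_P=\dv(2h)$, invoke the maximum principle only in its cleanest form (zero divisor implies constant) to get $g_P=2h+c$, and then read off a parity contradiction: $g_P$ must have even slopes everywhere, yet it equals $f_e$ on the interior of each $e\in P$, where the slopes are $\pm 1$. Your route avoids the structural analysis of $\Gamma_{\max}$ relative to the vertices and mid-points of $P$ and localizes the contradiction to a single edge, at the modest cost of introducing the auxiliary function $g_P$; the paper's argument is more hands-on but conceptually equivalent, since both ultimately rest on the tropical maximum principle. Your remark that $P\cap P'$ need not be cyclic for the linearity identity is accurate and worth keeping.
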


\begin{proof}
 Recall from \cite{MZ} and \cite{BMV} that there is a natural group isomorphism, $$\Pic^0(\Gamma)\cong H_1(G,\RR) /H_1(G,\ZZ),$$ 
  and the set of 2-torsion points of $\Pic^0(\Gamma)$
(the square roots of $\Gamma$) is identified with $H_1(G,\ZZ/2\ZZ)=\cC_G$. Hence $ \RtG$ has the same cardinality as $\cC_G$.
There remains to prove that the $[F_P]$ are all distinct. 
 It is easy to check that  for every $P,P'\in\cC_G$ we have
\[ 
F_P+F_{P'}-F_{P+P'}=\sum_{e\in P\cap P'}\dv(f_e). 
\]
Hence  
  \[
F_P-F_{P'}\sim F_P-F_{P'} +2F_{P'} = F_P+F_{P'}\sim F_{P+P'}.
\]
Therefore it suffices to show  that $[F_P]\neq 0$  for every  nonzero  $P\in \cC_G$.  By \eqref{F_P}  we have $F_P(p)=0$ unless $p$ is either a vertex of $P$ (in which case $F_P(p)\geq 1$)
 or the mid-point $p_e$ of an edge of $P$ (in which case $F_P(p_e)=-1$).
 By contradiction, assume that for some (nonzero) cycle   $P$ we have $\dv(f)=F_P$, 
with $f$   a rational function on $\Gamma$. Let $\Gamma_{\max}$ be the locus of $\Gamma$ where $f$ attains its maximum. Since  $\Gamma$  is compact, $\Gamma_{\max}$ is a non-empty closed subcurve, moreover   $ \Gamma_{\max}\neq \Gamma$ (as otherwise $\dv(f)=0$).  
 
  We know that the slopes of $f$  outgoing  $\Gamma_{\max}$  are all negative, while  the slopes of $f$
   internal to $\Gamma_{\max}$ are zero
    ($f$ is   constant on  $\Gamma_{\max}$).
 Therefore
   $\dv(f)(p)<0$ for every $p\in \Gamma_{\max}\cap\ol{\ \Gamma\ssm \Gamma_{\max}}$. Since the only points of $\Gamma$ where $F_P$ is negative are the mid-points $p_e$ of the edges in $P$, we deduce  
\begin{equation}
 \label{Gammamax}
 \Gamma_{\max}\cap \ol{\Gamma\ssm \Gamma_{\max}}\subset \{p_e,\  e\in P\}.
 \end{equation}
Let us show that no vertex of $P$ can be contained in  $ \Gamma_{\max}$.  If $v\in V(P)$ were in $ \Gamma_{\max}$,
 it would have to lie in its interior, by \eqref{Gammamax}, and hence  $\dv(f)(v)=0$
 (as $f$ is constant on $\Gamma_{\max}$), which contradicts $F_P(v)\geq 1$.
  
We then conclude that $\Gamma_{\max}$ is   a union of mid-points $p_e$, for some edges $e$ of $P$.
Since the slopes outgoing $\Gamma_{\max}$ are  negative we obtain $\dv(f)(p_e)\leq -2$ for every such $p_e$, 
in contradiction with  $F_P(p_e)=-1$.
\end{proof}

We now turn to theta-characterstics; again, let
 $P\in \cC_G$. Consider the following  divisor on 
$\Gamma$,
\begin{equation}\label{TP}
T_{P}:=\sum_{v\in V}\left(\frac{\deg_P(v)}{2}-1 +w (v)\right)v+\sum_{e\in E \ssm P} p_e.
\end{equation}
Let us show that as $P$ varies the $[T_P]$  give all the theta-characteristics of $\Gamma$.
\begin{Prop}
\label{distinctt} Let $\Gamma=(G,w,\ell)$ be a tropical curve and $P\in \cC_G$. Then $[T_P]$ is a theta-characteristic and
the following is a bijection
$$
\cC_G\stackrel{\beta}{\la} \TtG;\quad \quad P\mapsto [T_P].
$$
\end{Prop}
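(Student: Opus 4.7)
The plan is to verify the theta-characteristic property by a direct computation using the basic relation \eqref{f_e}, and then deduce the bijectivity by reducing to Proposition~\ref{distinct} via a fixed translation.

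First, I would check that $T_P\in\Div^{g-1}(\Gamma)$: summing the coefficients and using $\sum_v\deg_P(v)=2|P|$, $\sum_v w(v)=g-b_1(G)$, and $|E|-|V|=b_1(G)-1$ gives degree $g-1$. Next, to see that $[T_P]$ is a theta-characteristic, I compute
\[
K_\Gamma-2T_P=\sum_{v\in V}\bigl(\deg(v)-\deg_P(v)\bigr)v-\sum_{e\in E\ssm P}2p_e=\sum_{e\in E\ssm P}(u_e+v_e-2p_e).
\]
By Example~\ref{fe}, each summand is $\dv(f_e)$, hence $K_\Gamma-2T_P\in\Prin(\Gamma)$ and $[2T_P]=[K_\Gamma]$.

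For bijectivity, the key observation is that $T_P$ differs from $F_P$ by a divisor independent of $P$. Indeed, comparing \eqref{F_P} and \eqref{TP}:
\[
T_P-F_P=\sum_{v\in V}(w(v)-1)v+\sum_{e\in E}p_e=:C,
\]
with $C$ depending only on $\Gamma$. Since $[2F_P]=0$ and $[2T_P]=[K_\Gamma]$, we get $[2C]=[K_\Gamma]$, so $[C]\in\TtG$. Translation by $[C]$ is therefore a bijection $\RtG\to\TtG$, $[D]\mapsto [D]+[C]$, with inverse given by translation by $-[C]$ (which lands in $\RtG$ since two theta-characteristics differ by a square root). Composing with the bijection $\cC_G\to\RtG$, $P\mapsto[F_P]$, supplied by Proposition~\ref{distinct}, we obtain that
\[
\beta\colon\cC_G\la\TtG,\quad P\longmapsto [F_P]+[C]=[T_P]
\]
is a bijection.

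I do not anticipate any substantial obstacle: the argument is entirely formal once one has the right bookkeeping. The only point requiring minor care is verifying the identity $T_P=F_P+C$ (so that the translation trick applies), which amounts to checking that the contributions over $E\ssm P$ in $T_P$ and over $P$ in $F_P$ combine with the vertex terms to give a $P$-independent divisor. Everything else is an immediate consequence of Proposition~\ref{distinct} and the relation \eqref{f_e}.
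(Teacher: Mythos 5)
Your proof is correct and follows essentially the same route as the paper: both rest on the relation \eqref{f_e} for the square-root/theta computation and on the identity $T_P=F_P+\sum_{v\in V}(w(v)-1)v+\sum_{e\in E}p_e$ (the paper's \eqref{T_P}) to reduce distinctness of the $[T_P]$ to Proposition~\ref{distinct}. The only cosmetic difference is that you make surjectivity explicit by exhibiting translation by $[C]=[T_0]$ as a bijection $\RtG\to\TtG$, whereas the paper proves injectivity and leaves surjectivity implicit in the fact that $\TtG$ is a torsor under $\RtG$, hence of the same finite cardinality as $\cC_G$.
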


\begin{proof}
Recalling the definition of $F_P$ we have
\begin{equation}
\label{T_P}
T_{P}=\sum_{v\in V}\left( w(v)-1\right)v+F_P+\sum_{e\in E} p_e.
\end{equation}
As $2F_P\sim 0$ we get
  
$$
2T_{P}\sim \sum_{v\in V}\left(2w (v) 
- 2\right)v + 2\sum_{e\in E } p_e\sim  \sum_{v\in V}\left(2w(v) 
- 2\right)v + \sum_{e\in E }(u_e+v_e)
$$
using \eqref{f_e}. Since $\sum_{e\in E }(u_e+v_e)=\sum_{v\in V}\deg_{G}(v)v$ we have
$2T_P\sim K_{\Gamma}$, i.e. $[T_P]$ is a theta-characteristic.

It remains to show that if $P\neq P'$ then $[T_P]\neq [T_{P'}]$.
Using \eqref{T_P} we have
$$
T_P-T_{P'}=F_P-F_{P'}
$$
and $F_P-F_{P'}\not\sim 0$, by Proposition \ref{distinct}. The proof is complete.
\end{proof}

Notice that the following  
$$
K_{\Gamma}/2=\frac{\sum_{v\in V}(2w (v)-2+\deg(v)) v }{2}
$$
is a divisor on $\Gamma$ if and only if $G$ has only vertices of even degree, i.e. if and only if $G$ is cyclic.
Now, for any $P\in \cC_G$ we denote $\oP=P\cup V$ and
$$
\Gamma_{\oP}=(\oP, w_{|\oP}, \ell_{|P})\subset \Gamma,
$$
in other words $\Gamma_{\oP}$ is the subcurve of $\Gamma$ whose underlying graph is $\oP$, with the same weight function as $\Gamma$.
Then  $K_{\Gamma_{\oP}}/2 $ is a divisor (on both $\Gamma$ and $\Gamma_{\oP}$) and we have
$$
T_{P}=K_{\Gamma_{\oP}}/2  + \sum_{e\in E\ssm P} p_e.
$$

\subsection{Effective theta-characteristics via flows}

We say that a theta-characteristic $[T_P]$ on a tropical curve $\Gamma$ is {\it effective}
if its rank is nonnegative, i.e. if  $r_{\Gamma}(T_P)\neq -1$. Equivalently, $[T_P]$ is effective if there exists an effective divisor $E$ on $\Gamma$ such that $E\sim T_P$.
Our next goal is to characterize  effective theta-characteristics, and describe the effective representatives.

Recall the definition 
 $T_{P}=\sum_{v\in V}\left(\frac{\deg_P(v)}{2}-1 +w(v)\right)v+\sum_{e\in E\ssm P} p_e.
$ 
We write $T_P=T_P^+-T_P^-$ with $T_P^+$ and $T_P^-$ effective.
We have
$$T_{P}^-=\sum_{\stackrel{v \in V\ssm V(P)}{w(v)=0} }v.
$$
In other words, $T_P\geq 0$ if and only if every vertex of $G$ having   weight zero is contained in $P$.

\begin{defi}
 Let $P\in\mathcal C_G$   and $W\subset V $.  Assume that $P\cup W$ is not empty.
The \textit{cyclic subcurve} of $\Gamma$ associated to $P$ and $W$ is the tropical subcurve $\Gamma_{P,W}\subset \Gamma$ supported on the graph $P\cup W$ (we consider $W$ to be a subgraph of $G$ with vertex set $W$ and no edges).
\end{defi}

Consider a  cyclic subcurve $\Gamma_{P,W}$ of $\Gamma$. Let us define a sub-orientation $O_{P,W}$ on $G$ and a divisor $D_{P,W}$ on $\Gamma$. Informally,   $O_{P,W}$ will be the {\it flow} away from  $\Gamma_{P,W}$.
More precisely, we choose on
 $P$   a cyclic orientation; 
there are   $2^{b_1(P)}$ ways to do it, the choice of which will be irrelevant.
Next, we orient 
the edges in the complement of $P$   away from $\Gamma_{P,W}$, as follows.
Consider the distance function
$$\begin{aligned}d_{P,W}\col& \Gamma \longrightarrow \mathbb R_{\geq 0}\\
&p\longmapsto d(p,\Gamma_{P,W}),
\end{aligned}
$$
where $d(p,\Gamma_{P,W})$ denotes the distance from $p$ to $\Gamma_{P,W}$. 
The function $d _{P,W}$ is easily seen to be continuous and  piecewise linear with slope 0 on $P$ and $\pm1$ away from $P$.
 Moreover, for any edge $e$ of $G$, the function $d _{P,W}$  has at most one critical point lying in the interior
 of $e$.
 
 We consider the subdivision $\wh G_{P,W}$ of $G$ given by inserting a vertex  at each critical point    of $d _{P,W}$. We orient   each edge of $\wh G_{P,W}-P$ towards the maximum of  $d_{P,W}$, so that we have  a  sub-orientation $O_{P,W}$ on $G$. 
We set
\begin{equation}\label{eq:DPW}
D_{P,W}:=D^-_{O_{P,W}}=\sum_{v\in V(\wh G_{P,W})}\left(\deg^-_{O_{P,W}}(v)-1+w_\Gamma(v)\right)v.
\end{equation}
 \begin{remark}
 If $W=V$ every vertex $v$  not lying on $P$ is a source 
 (i.e. $\deg^-_{O_{P,V}}(v)=0$)
 and $p_e$ is a sink (hence $\deg^-_{O_{P,V}}(p_e)=2$)
 for
 every edge $e $ not in $P$.  Therefore
by \eqref{TP}  we have
\begin{equation}\label{eq:DP}
D_{P,V}=
T_P.
\end{equation}
\end{remark}

\begin{prop}\label{DPS}
Let  $\Gamma_{P,W}$ and $\Gamma_{P',W'}$ be two cyclic subcurves  of a tropical curve  $\Gamma$. Then
\begin{enumerate}
\item
\label{DPS1}
 $D_{P,W}\sim D_{P',W'}$ if and only if $P=P'$. 
 \item
 \label{DPS2}
$[D_{P,W}]$ is a theta-characteristic on $\Gamma$, that is, $2D_{P,W}\sim K_{\Gamma}$.
\end{enumerate}
\end{prop}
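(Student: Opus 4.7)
The plan is to use the distance function $d_{P,W}\col\Gamma\to\mathbb R_{\geq 0}$ itself as a rational function on $\Gamma$ and read off its divisor explicitly. Observe first that $d_{P,W}$ has slope $0$ on every edge of $P$ (which is entirely contained in $\Gamma_{P,W}$) and slope $\pm 1$ elsewhere, so it has integer slopes and is a bona fide rational function on $\Gamma$; moreover, the critical points of $d_{P,W}$ in the interior of edges are, by construction, exactly the new vertices of the subdivision $\wh G_{P,W}$ used to define $D_{P,W}$. All of the work will consist in computing $\dv(d_{P,W})$ case by case.

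For the second assertion, I would compute $\ord_p(d_{P,W})$ at each vertex $p$ of $\wh G_{P,W}$. At a subdivision vertex $p$ (a local maximum lying in the interior of a non-$P$ edge) both outgoing slopes equal $-1$, so $\ord_p(d_{P,W})=-2$. At a vertex $v\in V\cap\Gamma_{P,W}$ the $P$-edges contribute $0$ and each non-$P$ edge is oriented outward from $v$ (since $d_{P,W}(v)=0$) and contributes $+1$, so $\ord_v(d_{P,W})=\deg(v)-\deg_P(v)$; since the cyclic orientation chosen on $P$ is balanced at $v$, i.e.\ $\deg^-_{O_{P,W}}(v)=\deg_P(v)/2$, this quantity equals $\deg_\Gamma(v)-2\deg^-_{O_{P,W}}(v)$. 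At a vertex $v\in V\smallsetminus\Gamma_{P,W}$, every incident edge is non-$P$ and contributes $+1$ if oriented outward and $-1$ if oriented inward, yielding again $\ord_v(d_{P,W})=\deg_\Gamma(v)-2\deg^-_{O_{P,W}}(v)$. Putting these cases together,
\[
\dv(d_{P,W}) \;=\; \sum_{p}\bigl(\deg_\Gamma(p)-2\deg^-_{O_{P,W}}(p)\bigr)\,p,
\]
and a direct comparison with the definitions of $K_\Gamma$ and of $D_{P,W}$ in \eqref{eq:DPW} yields $\dv(d_{P,W})=K_\Gamma-2D_{P,W}$; hence $2D_{P,W}\sim K_\Gamma$.

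For the first assertion, I would reduce the problem to proving that $[D_{P,W}]=[T_P]$ for any choice of $W$. Since by \eqref{eq:DP} we have $D_{P,V}=T_P$, this reduction would imply both directions of the assertion via Proposition \ref{distinctt} (which tells us that the classes $[T_P]$, as $P$ varies in $\mathcal C_G$, are pairwise distinct). To prove $[D_{P,W}]=[T_P]$ I would exhibit the explicit rational function
\[
g_W \;:=\; \tfrac{1}{2}\bigl(d_{P,V}-d_{P,W}\bigr).
\]
Indeed, along any edge of $G$ the slopes of $d_{P,V}$ and $d_{P,W}$ are simultaneously $0$ (if on $P$) or both in $\{\pm 1\}$ (if off $P$), so their difference has slopes in $\{0,\pm 2\}$ and $g_W$ has integer slopes, hence is a legitimate rational function on $\Gamma$. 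Using linearity of $\dv$ together with the second assertion applied to both $W$ and $V$,
\[
2\,\dv(g_W) \;=\; \dv(d_{P,V})-\dv(d_{P,W}) \;=\; (K_\Gamma-2T_P)-(K_\Gamma-2D_{P,W}) \;=\; 2(D_{P,W}-T_P),
\]
so $\dv(g_W)=D_{P,W}-T_P$ and $D_{P,W}\sim T_P$, as needed. The only subtle point is verifying that $g_W$ has integer slopes; this relies entirely on the slope dichotomy ``$0$ on $P$ and $\pm 1$ off $P$'' for the two distance functions involved, and once this is in place the rest of the argument is purely formal.
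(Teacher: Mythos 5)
Your proposal is correct, but it is organized quite differently from the paper's proof, which never computes $\dv(d_{P,W})$ itself. The paper proves the single-step equivalence $D_{P,W}\sim D_{P,W\cup\{v\}}$ by a local, segment-by-segment analysis of the half-difference $f=(d_{P,W\cup\{v\}}-d_{P,W})/2$ (matching, around each point, the slope contributions of $f$ against the difference of in-degrees of the two orientations), iterates over vertices to reach $W=V$, and then derives both part \eqref{DPS2} and the ``only if'' direction of part \eqref{DPS1} from the identity $D_{P,V}=T_P$ together with Proposition~\ref{distinctt}. You instead establish the master identity $\dv(d_{P,W})=K_\Gamma-2D_{P,W}$ by computing $\ord_p(d_{P,W})=\deg_\Gamma(p)-2\deg^-_{O_{P,W}}(p)$ at every point; your case analysis is sound (note that it silently uses the balancedness $\deg^-_{O_{P,W}}(v)=\deg_P(v)/2$ of the cyclic orientation at vertices of $P$, which is implicit in the paper as well, since it is already needed for \eqref{eq:DP}), and it yields part \eqref{DPS2} directly, with no appeal to $T_P$ or to Proposition~\ref{distinctt}. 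Part \eqref{DPS1} then follows from the purely formal subtraction $2\dv(g_W)=\dv(d_{P,V})-\dv(d_{P,W})=2(D_{P,W}-T_P)$, which is legitimate because $\dv$ is additive on rational functions and $\Div(\Gamma)$ is torsion-free; the only genuine verification is the integrality of the slopes of $g_W=(d_{P,V}-d_{P,W})/2$, which is the same parity-of-slopes dichotomy the paper exploits, but invoked once globally (and for $V$ versus $W$ in one shot) rather than inside a pointwise comparison of two orientations performed one vertex at a time. Like the paper, you still need $D_{P,V}=T_P$ and Proposition~\ref{distinctt} for injectivity in \eqref{DPS1}. What your route buys is a clean, reusable formula exhibiting $K_\Gamma-2D_{P,W}$ as the divisor of the explicit rational function $d_{P,W}$; what the paper's route buys is that it only ever needs the local comparison of two flows, never the global divisor of a distance function.
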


\begin{proof}
We first prove   $D_{P,W}\sim  D_{P,W\cup\{v\}}$ for any vertex $v\in \Gamma$.
Consider the following  function $f$ on $\Gamma$ 
$$f:=\frac{d_{P,W\cup\{v\}} -d_{P,W}}{2}.$$
Of course,  $f$ is continuous and piecewise linear  with finitely many pieces. We will show that $f$ has integer slopes, i.e.  is a rational function, and that $D_{P,W}-D_{P,W\cup \{v\}}=\dv(f)$.

Set $O_1:=O_{P,W}$ and $O_2:=O_{P,W\cup\{v\}}$. Assume without loss of generality that $O_1$ and $O_2$ coincide  over $P$. For  $p\in\Gamma$ we have
\begin{equation}\label{eq:difference}
D_{P,W}(p)-D_{P,W\cup \{v\}}(p)=\deg^-_{O_1}(p)-\deg^-_{O_2}(p).
\end{equation}
Let $B(p,\epsilon)\subset \Gamma$ be the closed ball with center $p$ and radius $\epsilon\in \mathbb R_{>0}$. If $\epsilon$ is small enough   $B(p,\epsilon)$ is the union of segments $h_i$ of length $\epsilon$ and  incident to $p$, for $i=1,\dots,\deg_{\Gamma}(p)$, over which both $d_{P,W}$ and $d_{P,W\cup\{v\}}$ are linear. Fix one such  $h_i$. 

If   $O_1$ and $O_2$ coincide along $h_i$, then $d_{P,W}$ and $d_{P,W\cup\{v\}}$ have the same slope on it. Therefore, their difference is a constant function on $h_i$, and the   slope of $f$ at $p$ along $h_i$ is $0$.  

If  $O_1$ and $O_2$  do not coincide along $h_i$, then $h_i$ is not contained in $P$, the slopes of $d_{P,W}$ and $d_{P,W\cup\{v\}}$  have absolute value $1$ and opposite signs. Therefore their sum is even  and  $f$ has slope $\pm 1$ along $h_i$.
So $f$ is a rational function. Suppose that $p$ is the target of $h_i$ according to $O_1$, and the source according to $O_2$,
Then  the contribution to $\deg^-_{O_1}(p)$ along $h_i$ is $1$, and the contribution to $\deg^-_{O_2}(p)$ is zero, so their difference is $1$.
Moreover, the slope of $d_{P,W}$ on $h_i$ is equal to $-1$
whereas the slope of $d_{P,W\cup \{v\}}$ is $+1$, hence the contribution to $\dv(f)(p)$ along $h_i$ is $(1-(-1))/2=1$.

Repeating this  for all segments $h_i$ we get
 $\dv(f)(p)=\deg^-_{O_1}(p)-\deg^-_{O_2}(p)$, hence by \eqref{eq:difference} we have $\dv (f)(p)=D_{P,W}(p)-D_{P,W\cup \{v\}}(p)$, as wanted.

Therefore  $D_{P,W}\sim  D_{P,W\cup\{v\}}$, hence  $D_{P,W}\sim D_{P, W'}$ for any $W,W'$. 

In particular $D_{P,W}\sim D_{P,V}$, hence, as
  $D_{P,V}=T_V$, by Proposition~\ref{distinctt} $D_{P,W}$ is a theta-characteristic.  Part \eqref{DPS2} is proved.
  
It remains to prove that $D_{P,W}$ and $D_{P',W'}$ are not equivalent if $P\ne P'$. 
Now it is enough to show that $D_{P,V}$ and $D_{P',V}$ are not equivalent, i.e. that $T_P$ and $T_{P'}$ are not equivalent, which follows again by Proposition \ref{distinctt}.
 \end{proof}

We write  $V_+(\Gamma)=\{v\in V:\  w(v)>0\}$, sometime  just  $V_+=V_+(\Gamma )$. We say that $\Gamma$ is a {\it pure} tropical curve if $V_+(\Gamma)=\emptyset$, equivalently, if $w=0$.

We   now  prove that     theta-characteristics are always effective with the only exception of
the  theta-characteristic $[T_0]$ on a pure tropical curve.
 \begin{Thm}\label{thm:rank}
Let $\Gamma=(G,w,\ell)$ be a tropical curve and
$P\in \cC_G$.
 Then $r_{\Gamma}(T_P)=-1$ if and only if $P=0$ and    $\Gamma$ is   pure.
\end{Thm}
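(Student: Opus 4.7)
The plan is to treat the two implications separately. For the ``effective'' direction (if $P\neq 0$ or $\Gamma$ is not pure, then $r_\Gamma(T_P)\geq 0$) I will exhibit an explicit effective representative via Proposition~\ref{DPS}. For the ``non-effective'' direction (if $P=0$ and $\Gamma$ is pure, then $r_\Gamma(T_0)=-1$) I will argue by contradiction, adapting the max-locus technique from the proof of Proposition~\ref{distinct}.

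For the first direction, assume $P\cup V_+(\Gamma)\neq\emptyset$ and take $W=V_+(\Gamma)$. By Proposition~\ref{DPS}, $D_{P,W}$ represents $[T_P]$, and I claim it is effective. This reduces to checking that $\deg^-_{O_{P,W}}(p)-1+w_\Gamma(p)\geq 0$ at every point $p$ in its support. I would verify this in four cases: (i) a vertex $v\in V(P)$ satisfies $\deg^-(v)\geq 1$ thanks to the cyclic orientation fixed on $P$; (ii) a vertex $v\in W\ssm V(P)$ is a source of the flow, but its weight $w(v)\geq 1$ compensates; (iii) a weight-zero vertex $v\notin V(P)\cup W$ lies at positive distance from $\Gamma_{P,W}$, and the first edge of a shortest path from $v$ to $\Gamma_{P,W}$ is oriented towards $v$ (since $d_{P,W}$ strictly decreases along it), so $\deg^-(v)\geq 1$; (iv) a critical point $p$ of $d_{P,W}$ is a local maximum and hence a sink, so $\deg^-(p)=2$. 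In every case the coefficient is nonnegative.

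For the converse, assume $P=0$ and $\Gamma$ is pure, so that
\[
T_0=\sum_{e\in E}p_e-\sum_{v\in V}v.
\]
Suppose for contradiction that $T_0\sim E$ with $E\geq 0$, and pick a rational function $f$ with $\dv(f)=E-T_0$. Then $\ord_v(f)\geq 1$ at every vertex, $\ord_{p_e}(f)\geq -1$ at every midpoint, and $\ord_p(f)\geq 0$ elsewhere. Consider the max-locus $\Gamma_{\max}$ of $f$. The case $\Gamma_{\max}=\Gamma$ forces $T_0=E\geq 0$, impossible since $T_0(v)=-1$. Otherwise, any vertex $v\in\Gamma_{\max}$ would satisfy $\ord_v(f)\leq 0$ (equal to $0$ if $v$ is in the topological interior of $\Gamma_{\max}$, strictly negative if $v$ is on the boundary), contradicting $\ord_v(f)\geq 1$. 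Hence $\Gamma_{\max}$ contains no vertex, and so $\Gamma_{\max}$ is a disjoint union of closed subsets of edge interiors.

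The heart of the argument is to rule out these components. For each edge $e$, every connected component $C$ of $\Gamma_{\max}\cap e$ is either a singleton or a proper closed sub-interval of the interior of $e$. If $C=\{q\}$ then both outgoing slopes of $f$ at $q$ are negative, so $\ord_q(f)\leq -2$, violating $\ord\geq -1$ regardless of whether $q=p_e$. If $C=[a,b]$ with $a<b$ then both $a$ and $b$ are boundary points of $\Gamma_{\max}$ with $\ord<0$, so each must coincide with the unique midpoint $p_e$, which is impossible. Thus $\Gamma_{\max}$ is empty, contradicting the existence of a maximum of $f$ on the compact space $\Gamma$. I expect this component analysis to be the main technical step, though it closely parallels the argument already carried out in Proposition~\ref{distinct}.
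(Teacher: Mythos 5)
Your proposal is correct, and while the effective direction matches the paper, your non-effectivity argument takes a genuinely different route. For effectivity, the paper invokes its Lemma~\ref{eff}: it takes $D_{P,\emptyset}$ when $P\neq 0$ and $D_{0,V_+}$ when $P=0$ and $\Gamma$ is not pure, then applies Proposition~\ref{DPS}; your uniform choice $W=V_+(\Gamma)$ (valid whenever $P\cup V_+\neq\emptyset$) is a minor streamlining of the same idea, and your four-case verification of $\deg^-_{O_{P,W}}(p)-1+w_\Gamma(p)\geq 0$ is sound --- in particular case (iii) correctly uses that the initial segment of a geodesic from a weight-zero vertex to $\Gamma_{P,W}$ is oriented toward that vertex. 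The real divergence is in showing $r_\Gamma(T_0)=-1$ for pure $\Gamma$: the paper reduces to the representative $D_{0,v}$ for a single vertex $v$, observes that the orientation $O_{0,v}$ is acyclic with $v$ as a source, so $D_{0,v}$ is $v$-reduced with $D_{0,v}(v)=-1<0$, and concludes rank $-1$ by citing reduced-divisor theory (Lemma 3.8(a) of \cite{CLM}, or the moderator result \cite[Lm.~7.10]{MZ}). You instead work directly with $T_0=\sum_{e\in E}p_e-\sum_{v\in V}v$ and run a self-contained maximum-locus argument, adapting the technique from Proposition~\ref{distinct}: the constraints $\ord_v(f)\geq 1$ at vertices, $\ord_{p_e}(f)\geq -1$ at midpoints, and $\ord_p(f)\geq 0$ elsewhere first expel all vertices from $\Gamma_{\max}$, and then the component analysis (a singleton component gives $\ord\leq -2$, an interval component forces two distinct boundary points to both equal the unique midpoint $p_e$) empties $\Gamma_{\max}$, contradicting compactness. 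Both arguments are complete; the paper's is shorter and places $T_0$ within the standard reduced-divisor framework, whereas yours requires no external input beyond the definitions and keeps the whole theorem elementary, at the cost of a somewhat longer case analysis that essentially redoes, in sharper form, the bookkeeping already present in the proof of Proposition~\ref{distinct}.
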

The case of a pure tropical curve is known; see \cite{zharkov}. We include the full proof for completeness and better clarity.
First  we note the following simple fact.
\begin{lemma}
\label{eff}
Let $P\in \cC_G$ and $W\subset V$ with $P\cup W$ non empty. Then
\begin{enumerate}
\item
\label{eff1}
$D_{P,W}\geq 0$ if and only if $w(v)>0$ for every $v\in W\ssm V(P)$.
 \item
  \label{effz}
$D_{0,W}\geq 0$ if and only if  $W\subset V_{+}$.
\item
  \label{effe}
$D_{P,\emptyset }\geq 0$.
\end{enumerate}

\end{lemma}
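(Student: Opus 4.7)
The plan is to compute, for each vertex $v$ of the subdivided graph $\wh G_{P,W}$, the coefficient $c(v) := \deg^-_{O_{P,W}}(v) - 1 + w_\Gamma(v)$ of $v$ in $D_{P,W}$ from \eqref{eq:DPW}, and determine its sign case by case. Exactly one family of vertices can produce a negative contribution, and the characterisation in \eqref{eff1} will fall out immediately; \eqref{effz} and \eqref{effe} will then be straightforward specialisations.

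I would split $V(\wh G_{P,W})$ into four groups. First, the inserted subdivision vertices $p_e$: on any edge $e\notin P$ the function $d_{P,W}$ is the pointwise minimum of two affine functions (one for each way of exiting $e$ toward $\Gamma_{P,W}$), so it is concave along $e$ and every interior critical point is a local maximum; hence both half-edges meeting $p_e$ are oriented toward it, giving $\deg^-(p_e)=2$ and $c(p_e)=1$. Second, for $v\in V(P)$ the cyclic orientation of $P$ contributes $\deg_P(v)/2\geq 1$ incoming $P$-half-edges, while every non-$P$ edge at $v$ points away from $v$ because $v\in\Gamma_{P,W}$ forces $d_{P,W}$ to rise with slope $+1$ along it near $v$; so $c(v)\geq w(v)\geq 0$. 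Third, for $v\in W\ssm V(P)$ all incident edges are outside $P$ and, by the same monotonicity, each is oriented away from $v$, whence $\deg^-(v)=0$ and $c(v)=w(v)-1$. Fourth, for $v\in V\ssm(V(P)\cup W)$ we have $d_{P,W}(v)>0$, so any geodesic from $v$ to $\Gamma_{P,W}$ must leave $v$ along some incident edge on which $d_{P,W}$ initially decreases; that edge is therefore oriented into $v$, and $c(v)\geq w(v)\geq 0$.

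Collecting the four computations, $c(v)$ can be negative only in the third case, and there $c(v)<0$ exactly when $w(v)=0$. This proves \eqref{eff1}. For \eqref{effz}, taking $P=0$ gives $V(P)=\emptyset$, so $W\ssm V(P)=W$ and the condition becomes $w(v)>0$ for every $v\in W$, i.e.\ $W\subset V_+$. For \eqref{effe}, taking $W=\emptyset$ makes $W\ssm V(P)$ empty so the condition is vacuous. The most delicate step will be the geodesic argument in the fourth case, since one must be sure that an incoming arrow at $v$ is produced regardless of how many competing shortest paths from $v$ to $\Gamma_{P,W}$ exist; the remainder is just unwinding the definition of $O_{P,W}$.
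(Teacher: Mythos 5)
Your proof is correct and follows essentially the same route as the paper: the paper also reduces \eqref{effz} and \eqref{effe} to trivial specialisations of \eqref{eff1}, and proves \eqref{eff1} by observing that $\deg^-_{O_{P,W}}(v)=0$ exactly when $v\in W\ssm V(P)$ and $\deg^-_{O_{P,W}}(v)\geq 1$ otherwise, so that the negative part of $D_{P,W}$ is supported on the weight-zero vertices of $W\ssm V(P)$. Your four-case analysis (subdivision points, vertices of $P$, vertices of $W\ssm V(P)$, and the remaining vertices, where a geodesic to $\Gamma_{P,W}$ supplies an incoming arrow) is simply a careful verification of that in-degree dichotomy, which the paper asserts without detail.
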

 
\begin{proof}
Notice that 
 \eqref{effz} and  \eqref{effe}  follow  trivially  from \eqref{eff1}.

Let us  prove \eqref{eff1};  we have
 $$
\deg^-_{O_{P,W}}(v) 
\begin{cases}
\begin{array}{ll}
= 0  &   \text{ if }   v  \in W\ssm V(P)\\
 \geq 1 &   \text{ otherwise. }
\end{array}
\end{cases}
$$
 Therefore, writing $D_{P,W}=D^+_{P,W}-D^-_{P,W}$ as the difference of two effective divisors, we have
 $$
 D^-_{P,W}=\sum _{\stackrel{v \in  W\ssm V(P) }{w(v)=0} }v
 $$
 which is zero if and only if $w(v)>0$ for every $v \in  W\ssm V(P)$. 
\end{proof}
Now we prove Theorem~\ref{thm:rank}. 
\begin{proof}
By Lemma~\ref{eff}\eqref{effe}, if $P\neq 0$ then $D_{P,\emptyset}\geq 0$.
By Proposition~\ref{DPS} we have $D_{P,\emptyset}\sim D_{P,V}$ and since 
  $D_{P,V}= T_P$ we conclude that $[T_P]$ is effective.

We are left with the case $P=0$. 

If $\Gamma$ is not pure then  $V_+\neq \emptyset$, hence we can consider
$D_{0,V_+}$, which is a representative for $[T_0]$.  By Lemma~\ref{eff} we have $D_{0,V_+}\geq 0$, hence $[T_0]$ is effective.
This concludes the proof in case $\Gamma$ is not pure.

 Now suppose $\Gamma$ is   pure.    To finish the proof we need to show  
 $$r_{\Gamma}(T_0)=-1$$
It suffices to prove that $r_{\Gamma}(D_{0,v})=-1$ for any vertex $v$ of $\Gamma$.
By definition, $D_{0,v}$ is associated to the  sub-orientation  $O_{0,v}$, which is clearly  acyclic
(having $v$ as a source).  Therefore, $D_{0,v}$ is a reduced divisor with respect to $v$ in the sense of \cite{BNRR}, so the fact that $D_{0,v}(v)<0$ implies that it has rank equal $-1$ (see e.g., Lemma 3.8(a) in \cite{CLM}). In fact, $D_{0,v}$ is a moderator in the sense of \cite[Def. 7.8]{MZ}, so the fact that it has rank $-1$ follows also from
 \cite[Lm. 7.10]{MZ}. \end{proof}

\begin{remark}
  If $\Gamma$ is a pure tropical curve, then the theta-characteristics described by Zharkov in \cite{zharkov} are the classes of the divisors $D_{P,W}$ where $W=\emptyset$ for $P\ne 0$, and $|W|=1$ for $P=0$.
\end{remark}

 \section{The moduli space of tropical theta-characteristics}
The moduli space, $\Mgt$, of tropical curves of genus $g$, first  constructed in \cite{BMV},
has the structure of a  generalized cone complex   (see   \cite{ACP}). 
The points in $\Mgt$ are in bijective correspondence with (equivalence classes of) tropical curves of genus $g$.

Similarly, 
  the moduli space $\Sgt$, of   spin tropical curves of genus $g$, constructed in \cite[sec. 2.5]{CMP1} is a generalized cone complex, and there is a natural morphism of generalized cone complexes
  $$
  \pi^{\trop} :\Sgt\la\Mgt.
  $$
This morphism has an explicit connection with the analogous  moduli spaces of Deligne-Mumford stable algebraic curves, $\Mgbst$, and stable spin curves, $\Sgbst$, for which we refer to loc. cit. Theorem C.

We shall now  construct the moduli space of tropical theta-characteristics on tropical curves of genus $g$ as 
 a generalized cone complex, and relate it to $\Mgt$ and $\Sgt$. 
  Since the procedure is very similar to the one used in \cite{ACP} and \cite {CMP1} we will skip
many details.

By Proposition~\ref{distinctt}, for any  tropical curve   
$\Gamma=(G,w, \ell)$ of combinatorial type $(G,w)$, we have the  isomorphism
  $ 
\cC_G\cong \TtG
$  
 associating to each cyclic subgraph $P$ of $G$ the theta-characteristic $[T_P]$. Therefore, for each tropical curve of combinatorial type $(G,w)$, there is exactly one theta-characteristic associated to the choice of each cyclic subgraph $P$ of $G$.

Notice that a non-trivial automorphism of $\Gamma$ may fix a cyclic subgraph of $G$, that is, $\Aut(\Gamma)$ may act non-trivially  
 on $\TtG$. 
 
 
 For any $P\in \cC_G$ we denote by
  $T^{\trop}_{(G,P)}$
 the set of isomorphism classes of all theta-characteristics of type $[T_P]$ on all tropical curves $\TC$.

 We consider
the poset of cyclic graphs of genus $g$, i.e.:
$$
 \cC_g:=\bigsqcup_{G\in \Sg }\cC_G
$$
where $\Sg$ is the poset of stable graphs of genus $g$, partially ordered by edge contraction. 
Elements in $ \cC_g$ are written as pairs $(G,P)$ with $G$ a stable graph and $P\in \cC_G$.
The poset structure on $ \cC_g$ is given by edge contraction, as follows. For
$(G,P)$ and $(G',P')$ in $\cC_g$ 
we 
say that $(G,P)\geq (G',P')$ 
 if there exists a contraction $\gamma\col G\to G'$ such that $\gamma_*P=P'$ (in this case we have $G\geq G'$ by definition). By  \cite[Prop. 2.3.1]{CMP1}
 the poset $ \cC_g$ is connected and the forgetful map $ \cC_g\to \Sg$ is a quotient of posets. We also consider the poset $[\cC_g]:=\sqcup_{G\in \Sg} \cC_G/\Aut(G)$.

We introduce the category, {\sc{cyc}}$_{g}$, whose objects are isomorphism classes of pairs
$(G,P)$ with $G\in \Sg$ and $P\in \cC_G$,
 and whose arrows are generated by contractions and automorphisms of pairs (that is, the elements of the subgroup $\Aut(G,P)$ of $\Aut(G)$ fixing $P$). 
 
 We now  define a contravariant functor from the category {\sc{cyc}}$_{g}$   to the category of rational polyhedral cones.
 To the isomorphism class of   $(G,P)$ we associate the cone
$$
\sigma_{_{(G,P)}}=\RR_{\geq 0}^{E(G)},
$$
with  the   integral structure determined by the sub-lattice parametrizing tropical curves having  integral edge-lengths.
As usual, we write $\sigma_{_{(G,P)}}^o=\RR_{> 0}^{E(G)}$.

To a contraction  $\gamma\col (G,P)\to (G',P')$ we associate the injection of cones 
$ 
\iota_{\gamma}\col \sigma_{_{(G',P' )}}\ha \sigma_{_{(G,P )}}
$ 
whose image is the face of $\sigma_{_{(G,P)}}$ where the coordinates corresponding to $E(G)\smallsetminus E(G')$ vanish.
If $\gamma\in \Aut(G,P)$   then $\iota_{\gamma}$ is the corresponding   automorphism of $\RR_{\geq 0}^{E(G)}$.
By the results in \cite[sect. 2]{CMP1}, this is indeed a contravariant functor.
We can therefore consider  the colimit of the diagram of cones $\sigma_{_{(G,P)}}$ using the inclusions $\iota_{\gamma}$,
for all arrows, $\gamma$, in {\sc{cyc}}$_{g}$, which is the following generalized cone complex
  $$
\Ttg:=  \varinjlim \left(\sigma_{_{(G,P)}}, \iota_{\gamma}\right).
 $$

\begin{Thm}
\label{Tgthm} 
The following properties hold.
\begin{enumerate}
 \item
 \label{Tgthm1} 
The space $\Ttg$ is 
the moduli space of   tropical theta-characteristics and we have a stratification
 $$
\Ttg=\bigsqcup_{[G,P]\in [\cC_g]} T^{\trop}_{(G,P)}. 
$$
\item
We have
$
  T^{\trop}_{(G,P)}\cong\sigma_{_{(G,P)}}^o/ \Aut(G,P).
$

 \item $\Ttg$ is connected and  has    pure dimension $3g-3$.

   \item $ T^{\trop}_{(G',P')}\subset \overline{T^{\trop}_{(G,P)}}$ if and only if $(G,P)\geq (G',P')$.
\end{enumerate}
\end{Thm}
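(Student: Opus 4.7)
The plan is to mirror the construction of the moduli space of tropical spin curves in \cite{CMP1} (which itself follows \cite{ACP}), substituting the bijection $\mathcal C_G\cong T^{\trop}_\Gamma$ of Proposition \ref{distinctt} for the datum parametrizing spin structures. For a fixed combinatorial type $(G,w)$ and a cyclic subgraph $P\in\mathcal C_G$, a point of the open cone $\sigma^o_{(G,P)}=\mathbb R_{>0}^{E(G)}$ is a length function $\ell$, to which I associate the pair $(\Gamma,[T_P])$ with $\Gamma=(G,w,\ell)$. Two such data yield isomorphic pairs exactly when related by an element of $\Aut(G,P)$, so the image of $\sigma^o_{(G,P)}$ in $T_g^{\trop}$ is canonically $\sigma^o_{(G,P)}/\Aut(G,P)$. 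This realizes the stratum $T^{\trop}_{(G,P)}$ and establishes (2).

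To assemble the strata into a global moduli space, I would verify that the face maps $\iota_\gamma$ of the colimit are compatible with the assignment $(G,P,\ell)\mapsto (\Gamma,[T_P])$. Concretely, if $\gamma\colon (G,P)\to (G',P')$ is a contraction in $\textrm{cyc}_g$ and $\ell$ lies on the boundary face of $\sigma_{(G,P)}$ where the coordinates of the contracted edges vanish, the resulting tropical curve is canonically of combinatorial type $(G',w')$; on this curve, the divisor $T_P$ must be shown equivalent to $T_{P'}$. This follows from the explicit formula \eqref{TP}: an edge of $P$ being contracted simply merges its endpoints (preserving the cyclic structure), while the midpoints $p_e$ of contracted edges $e\notin P$ collide with vertices, and \eqref{f_e} together with a short chasing of principal divisors yields the equivalence with $T_{P'}$. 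Combined with Proposition \ref{distinctt} this proves that $T_g^{\trop}$ parametrizes isomorphism classes of pairs $(\Gamma,[T])$ with $[T]\in T^{\trop}_\Gamma$, establishing (1).

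For the remaining assertions, each cone $\sigma_{(G,P)}$ has dimension $|E(G)|$, and this attains the maximum $3g-3$ precisely when $(G,w)$ is a trivalent pure graph, which clearly admits cyclic subgraphs (including $P=0$); hence $T_g^{\trop}$ has pure dimension $3g-3$. Connectedness follows from the fact that the forgetful map $\mathcal C_g\to\Sg$ is a quotient of posets and $\mathcal C_g$ is itself connected by \cite[Prop.~2.3.1]{CMP1}, so the associated generalized cone complex is connected. Assertion (4) is then an immediate consequence of the colimit construction: by definition the face $\iota_\gamma(\sigma_{(G',P')})$ sits inside $\sigma_{(G,P)}$ precisely when $(G,P)\geq (G',P')$ in the poset $\mathcal C_g$, and this face relation descends to the closure relation $T^{\trop}_{(G',P')}\subset\overline{T^{\trop}_{(G,P)}}$ under the $\Aut$-quotient.

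The main technical obstacle is the compatibility check described in the second paragraph, i.e., verifying that $T_P$ specializes to $T_{P'}$ under edge-contraction in the tropical sense. Although intuitive, this requires care: one must track both the terms involving $\deg_P(v)/2$ and the contribution $\sum_{e\notin P}p_e$, which change in different ways depending on whether a contracted edge lies in $P$ or not. This step is formally analogous to (and technically simpler than) the corresponding step for spin structures carried out in \cite{CMP1}, where the sign function must also be tracked, and the argument there should serve as a direct template.
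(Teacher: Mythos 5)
Your proposal follows essentially the same route as the paper: the paper likewise parametrizes a point $\underline{l}\in\sigma_{(G,P)}$ by the pair $(\Gamma_{\underline{l}},T_P)$ via Proposition~\ref{distinctt}, deduces the stratification in (1) from the resulting surjection $\bigcup_{\Gamma\in\Mgt}\TtG\la\Ttg$, and handles (2)--(4) by the same arguments as in \cite[Props.~2.5.1 and 2.5.2]{CMP1}, exactly as you outline. The one difference is emphasis: the specialization compatibility you flag as the main technical obstacle is in effect built into the colimit construction (boundary points of $\sigma_{(G,P)}$ are by definition parametrized through the smaller cone $\sigma_{(G',P')}$, with the poset structure $\gamma_*P=P'$ coming from \cite{CMP1}), so the paper does not carry out that divisor-chasing check explicitly.
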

\begin{proof}
A  point  $\underline{l}=(l_e, \  e\in E) \in \sigma_{_{(G,P )}}$ corresponds to a pair  $(\Gamma_{\underline{l}}, P)$ where 
$\Gamma_{\underline{l}} = (G, w, \ell,)$ is the tropical curve 
whose length function is $\ell(e)=l_e$ for all $e\in E$.  Hence $\underline{l}$ corresponds to the theta-characteristic
$T_P\in T^{\trop}_{\Gamma_{\underline{l}}}$. 
By extending this reasoning to all tropical curves of genus $g$    we have a surjection
$$
\bigcup _{\Gamma \in \Mgt}\TtG\la \Ttg 
$$
which identifies $(\Gamma, T_P)$ with $(\Gamma', T_{P'})$ if and only if $\Gamma = \Gamma'$ and there is an automorphism of $\Gamma $ mapping $T_P$ to $T_{P'}$. From this \eqref{Tgthm1} follows. 

The rest  is   similar to the proofs of    \cite[Props   2.5.1 and  2.5.2]{CMP1}.
\end{proof}

We have a canonical  
morphism of generalized cone complexes
\[
\psi^{\trop}\col   \Ttg \la  \Mgt 
\]
sending a tropical theta-characteristic  $ [T_P] $ in
$\Gamma=(G,w,\ell)$ (associated to the cyclic subgraph $P$ of $G$)
  to the     tropical curve $ (G,w, \psi^{\trop}(\ell)) $, where 
$
\psi^{\trop} (\ell)(e) =2\ell(e)$ if $e\in E\smallsetminus P$, and $\psi^{\trop} (\ell)(e) = \ell(e)$ if $e \in P$.

By the same argument as in \cite[Prop. 2.5.3]{CMP1} we have, for every  tropical curve $[\Gamma]\in \Mgt$,
\[
{ (\psi^{\trop})}^{-1}( \Gamma )\cong \TtG/\Aut(\Gamma).
\]

 \begin{remark}\label{prop:pitrop} The canonical map $\pi^{\trop} :\Sgt\la\Mgt$  factors through $\psi^{\trop}$. 
 \end{remark}

\section{Lifting theta-characteristics on tropical curves}

From now on, we fix an algebraically closed field $k$  of characteristic different from $2$.

Let $X$ be a stable curve over $k$.
We let $(G,w)$ be the  (stable) dual   graph of $X$, with $G=(V,E)$. 
Recall that $V$ corresponds to the set of irreducible components of $X$ and $E$  to the set of its nodes. We often identify edges with nodes. The weight of a vertex is the genus of the desingularization of the corresponding component, therefore  $(G,w)$ is a stable graph and has   the same genus as $X$.

Recall that a \emph{spin curve} over $X$ is a pair $(\wh X, \wh L)$, where $\wh X=X^\nu_R\cup Z$ is a quasistable curve, with $X^\nu_R$ the normalization of $X$ at the subset of nodes corresponding to some $R\subset E$ and $Z$ the disjoint union of smooth connected rational components meeting $X^\nu_R$ at $2$ points (called \emph{exceptional components}), and $\wh L$ is a line bundle on $\wh X$ such that
\begin{enumerate}[(1)]
 \item 
the restriction, $L_R$, of  $\wh L$ to $X^{\nu}_R$ satisfies $L_R^2\cong \omega_{X^{\nu}_R}$;
 \item
 the restriction of $\wh L$  to each exceptional component $E$  has degree $1$.
   \end{enumerate}
   
We denote by  ${\cS}_X$ the scheme of spin curves over $X$. 
We will use the terminology \emph{theta-characteristic} for spin curves such that $\wh X=X$ (in this case, $\wh L^2\cong \omega_X$).
   The \emph{parity} of a spin curve refers to the parity of $h^0(\wh X, \wh L)$.

 The {\it dual spin graph} of a spin curve $(\wh X, \wh L)$ on $X$
 is the tern $(G,P ,s )$, where:
\begin{itemize} 
\item[(1)]  $G$ is the dual graph of $X$;
\item[(2)] $P=E \smallsetminus R$ (it is a cyclic subgraph of $G$);
\item[(3)] $s\col V(G/P)\ra \mathbb Z/2\mathbb Z$ is a function taking $v\in V(G/P)$ to $s(v)$, the parity of $h^0(Z_v, \wh L_{|Y_v})$, where $Y_v$ is the connected component of $X^\nu_R$ corresponding to $v$ (the set of connected components of $X^\nu_R$ is in bijection with the set of vertices $V(G/P)$). 
\end{itemize}
Recall that $(G,P,s)$ is a spin graph in the sense of \cite[Def. 2.1.1]{CMP1} and its parity is the parity of $\sum_{v\in V(G/P)} s(v)$.

 \

In what follows, we fix an algebraically closed non-Archimedean field  $K$   whose valuation ring is  
$R$  and residue field is $k$.

We let 
$X_K$ be a genus-$g$ smooth curve over $K$, we
 assume that  $X_K$ extends to a stable curve $\cX$ over $\Spec R$, whose special fiber we denote
by  $X$.

 We let  $\Gamma=(G,w,\ell)$  be the tropical curve given as    the skeleton of the Berkovich analitification $X_K^{\an}$. Recall that the length   $\ell$ is defined by setting $\ell(e)=\val_K(f_e)$ 
  where $xy=f_e$ is an \'etale local equation of $\cX$ at the node corresponding to the edge $e$.

The natural retraction map $\tau\col X_K^{\an}\ra \Gamma$    induces by linearity a specialization homomorphism: 
\[
\tD\col {\Div}(X_K)\la \Div(\Gamma)
\]
 where $\Div(X_K)$ is the group of divisors on $X_K$. 

Let us describe $\tD$ explicitly.
  Let $D_K$ be a 
  divisor on $X_K$.   Then there is a semistable curve $\wh{\cX}$ over $\Spec R$ with $\cX$ as stable model, such that $D_K$ extends to a Cartier divisor $\cD$ over $\wh{\cX}$; we let $\wh X$ be the special fiber of  $\wh{\cX}$.  The dual graph $\wh G$ of $\wh X$ is obtained by inserting $n_e$ vertices of weight zero in the interior of any edge $e$ of $G$, where $n_e$ is the number of rational components of $\wh X$ lying over the node of $X$ corresponding to $e$. 
Hence we  can  view $V(\wh G)$ as a subset of $\Gamma$ (see also Remark~\ref{addpt}).
The multidegree, $\mdeg\cD|_{\wh X}$, of the restriction of $\cD$ to ${\wh X}$ is thus a divisor of $\Gamma$ supported at $V(\wh G)$, and we have
\begin{equation}\label{eq:tau*}
\tD(D_K)=\mdeg\cD|_{\wh X}.
\end{equation}

\begin{remark}
\label{addpt}
 In passing from $\cX$ to  $\wh{\cX}$, the tropical equivalence class of the skeleton $\Gamma$ does not change.
Indeed, this amounts to  inserting weight zero vertices in the interior of the edges of $\Gamma$, getting a tropical curve, $\wh{\Gamma}=(\wh G,\wh{w},\wh{\ell})$,  having the same metric structure, and hence
  equivalence class, of $\Gamma$. More precisely (following \cite{ACP} Section 8)
  let $v$ be the vertex of $\wh G$ corresponding to the exceptional component, $C_v$, of $\wh X$.
  Let $e_0$ and $e_1$ be the edges of $\wh{\Gamma}$ corresponding to the two nodes
  in $C_v\cap\ov{\wh X \ssm C_v}$, and, for $i=0,1$, let $xy=f_{e_i}$ be an \'etale local equation of $\wh{\cX}$ at $e_i$. Then an \'etale local equation of $\cX$ at the node $e$ to which $C_v$ gets contracted is $xy=f_{e_0}f_{e_1}$,   therefore 
  $$
  \ell(e) =\val_K(f_{e_0}f_{e_1})=\val_K(f_{e_0})+\val_K(f_{e_1})=\hat{\ell}(e_0)+\hat{\ell}(e_1).
  $$ 
 \end{remark}

The map $\tD$ takes principal divisors on $X_K$ to principal divisors on $\Gamma$ (see  \cite{BR15}),
hence we get a homomorphism 
\[
\tP\col \Pic(X_K)\longrightarrow \Pic(\Gamma).
\] 

Let $\cSK\subset \Pic(X_K)$  and $\TtG\subset \Pic(\Gamma)$ be, respectively, the sets of theta-characteristics of $X_K$ and of $\Gamma$. Since $\tP$ is a homomorphism taking class of the canonical divisor of $X_K$ to the class of the canonical divisor of $\Gamma$, it restricts to a map: 
\[
\tS\col \cSK \longrightarrow \TtG.
\]

In Proposition \ref{distinctt} we introduced the bijection  $\beta: \cC_G\longrightarrow \TtG$ mapping $P$ to $[T_P]$. We now use it to construct  a useful factorization of the map  $\tS$. First of all, 
we define a map  
\[
\nS \col  \cSK \longrightarrow\cC_G
 \]
as follows. Given a theta-characteristic $L_K$ on $X_K$,    there is a  
unique pair $(\wh{\cX}, \wh{\mc L})$ with the following properties.
  $\wh{\cX}$ is a semistable curve over $\Spec R$ having $\cX$ as stable model,   $\wh{\mc L}$ is a line bundle on   $\wh{\cX}$ extending $L_K$;  we let  $\wh X$ be the special fiber of $\wh{\cX}$
and $\wh L=\wh{\mc L}_{|\wh X}$. Finally (which ensures uniqueness) $(\wh X,\wh L)$  is a spin curve on $X$.  We denote by
  $(G,P,s)$  the  dual spin graph  of $(\wh X,\wh L)$. 
  By definition,  $P\in \cC_G$, and we define 
\[
\nS (L_K)=  P. 
\]

\begin{Lem}\label{lem:factor}
With the above notation, we have the following factorization 
\[
\tS \col \cSK  \stackrel{\nS}{\la} \cC_G\stackrel{\beta}{\la} \TtG.
\]
\end{Lem}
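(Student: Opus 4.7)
The plan is to unwind the definitions of $\alpha$ and $\beta$ and compute the divisor $\tau_{\Div}(L_K)$ explicitly using the spin curve structure on the distinguished extension of $L_K$ to the stable model. The end product should be the divisor $T_P$ itself, not merely a divisor equivalent to it, which will make the factorization clear.

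Concretely, start with $L_K \in \cS_{X_K}$ and let $(\wh X,\wh L)$ be the spin curve on $X$ used in the definition of $\nS$, sitting on the semistable model $\wh{\cX}$; its dual spin graph is $(G,P,s)$ with $P \in \cC_G$, so $\nS(L_K)=P$. By Remark~\ref{addpt}, the skeleton of $\wh{\cX}$ is (equivalent to) $\Gamma$, the vertices of $\wh G$ corresponding to the exceptional components being exactly the midpoints $p_e$ for $e \in R = E\smallsetminus P$. Since $\wh{\mc L}$ extends $L_K$, formula~\eqref{eq:tau*} gives
\[
\tD(L_K) \;=\; \mdeg\,\wh L|_{\wh X},
\]
viewed as a divisor on $\Gamma$ supported at $V(\wh G)$.

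Next I would compute this multidegree component by component. On each exceptional component (indexed by an edge $e\in R$), the spin axiom forces $\deg \wh L = 1$, contributing $\sum_{e \in E\ssm P} p_e$. On a non-exceptional component $C_v$ (the desingularization of the component corresponding to $v\in V$), write $L_R := \wh L|_{X^\nu_R}$ and use $L_R^{\otimes 2} \cong \omega_{X^\nu_R}$. The dualizing sheaf of the nodal curve $X^\nu_R$ restricted to $C_v$ has degree $2g(C_v)-2+\deg_P(v) = 2w(v)-2+\deg_P(v)$, because the nodes of $X^\nu_R$ supported on $C_v$ are precisely those corresponding to $P$-edges at $v$ (loops contributing twice). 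Dividing by two gives
\[
\deg \wh L|_{C_v} \;=\; w(v) - 1 + \tfrac{\deg_P(v)}{2}.
\]
Summing the two contributions yields
\[
\tD(L_K) \;=\; \sum_{v\in V}\!\left(\tfrac{\deg_P(v)}{2}-1+w(v)\right)v \;+\; \sum_{e\in E\ssm P} p_e \;=\; T_P
\]
by the definition~\eqref{TP}. Passing to $\Pic(\Gamma)$ gives $\tP(L_K) = [T_P] = \beta(P) = \beta(\nS(L_K))$, which is the required factorization.

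The only potentially delicate point is keeping the identification of the vertex set of $\wh G$ with $V \sqcup \{p_e : e\in R\}$ consistent with the skeleton description of Remark~\ref{addpt}, and in particular checking that the multiplicity of $\mdeg\wh L|_{\wh X}$ at the vertex $p_e$ equals the coefficient of $p_e$ in $T_P$ (they are both $1$). Everything else is a direct computation, so I do not expect a serious obstacle beyond this bookkeeping.
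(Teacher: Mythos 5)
Your overall strategy coincides with the paper's: compute $\mdeg \wh L$ on the special fiber of the distinguished spin extension component by component and match the result against \eqref{TP}, via \eqref{eq:tau*}. The degree computations themselves are correct. However, there is one genuine gap: you assert, attributing it to Remark~\ref{addpt}, that the vertices of $\wh G$ corresponding to the exceptional components ``are exactly the midpoints $p_e$''. Remark~\ref{addpt} gives only the additivity $\ell(e)=\hat\ell(e_0)+\hat\ell(e_1)$, where $e_0,e_1$ are the two nodes lying on the exceptional component; it says nothing about where the new vertex sits inside $e$. A priori one could have $\hat\ell(e_0)\ne\hat\ell(e_1)$, in which case your computation only yields $\sum_{v\in V}\left(\frac{\deg_P(v)}{2}-1+w(v)\right)v+\sum_{e\in E\ssm P}q_e$ for some interior points $q_e$ of the edges $e$ --- and this matters: when $e$ lies on a cycle of $G$, the degree-zero divisor $q_e-p_e$ is not principal on $\Gamma$, so the resulting class would in general differ from $[T_P]$ and the factorization through $\beta$ would fail. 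The ``delicate point'' you flag at the end is the multiplicity at the exceptional vertex, which is immediate from the spin axiom; the location of the support is the real issue, and your proposal treats it as settled when it is not.

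The paper closes exactly this gap with a non-formal geometric input special to spin curves: by \cite[Eq. (4), Section 3.2]{CCC07}, there are \'etale neighborhoods of $\wh{\cX}$ at the two nodes $e_0$ and $e_1$ on an exceptional component in which the local equations are $x_0y_0=h_e$ and $x_1y_1=h_e$ with the \emph{same} element $h_e\in R$. Hence $\hat\ell(e_0)=\val_K(h_e)=\hat\ell(e_1)$, and combined with Remark~\ref{addpt} this forces the exceptional vertex to be the midpoint $p_e$ of $e$ in $\Gamma$. This equality of the two singularity parameters is the one substantive step of the lemma's proof; your argument needs to invoke it (or reprove it) to be complete --- everything else in your proposal matches the paper's proof.
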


\begin{proof}  We continue to use the   notation before the statement. 
  We need to show that $\tS (L_K)$ is  equal to   $[T_P]$.
By \eqref{eq:tau*}
$$
\tS (L_K)=\mdeg \wh{\mc L}_{|\wh X} = \mdeg \widehat{L}.
$$

 To compute the multidegree of $\widehat{L}$, observe that
 the dual graph, $\wh G$, of $\wh X$ is  
 the $R$-subdivision of $G$, where $R=E\setminus P$.
  For a vertex $v\in V(\wh G)$, we let $\wh C_v$ be the irreducible component of $\wh X$ corresponding to $v$.  Recall that $\widehat{L}$ restricts to a theta-characteristic on the complement of the exceptional components of $\wh X$, i.e. on the subcurve of $\wh X$ whose dual graph is $(V(G),P)$.
  
  Therefore  $ \forall v\in V(G) \subset V(\wh G)$ we have
\[
\deg_{\wh C_v} \widehat{L}=w(v)-1+\frac{\deg_P(v)}{2}.
\]
Comparing this with   \eqref{TP} we conclude that the coefficient of $v$ in  $\mdeg \widehat{L}$ and  $T_P$
coincide, as wanted.

Consider now $v\in V(\wh G)\ssm V(G)$, so that $\wh C_v$ is an exceptional component lying over a node of $X$ corresponding to an edge $e\in E\ssm P$.
We know that $\deg_{\wh C_v}\widehat{L}=1$. To conclude the proof, in view of \eqref{TP}, we need to show that in the tropical curve $\Gamma$ the vertex $v$ coincides with $p_e$, the mid-point of $e$.

We use Remark~\ref{addpt}; let $e_0$ and $e_1$ be the nodes of $\wh{X}$ lying on $\wh C_v$.
 Recall that there are \'etale neighborhoods of $\wh{\cX}$ around $e_0$ and $e_1$ in which the local equations of  $\wh{\cX}$ are, respectively, $x_0y_0=h_e$ and $x_1y_1=h_e$, for the same $h_e\in R$ (see \cite[Eq. (4), Section 3.2]{CCC07}). This implies   that the lengths of $e_0$ and $e_1$ are equal, and therefore $v$ is the mid point of $e$.
\end{proof}

We let $\cSK^+$ and $\cSK^-$, respectively, be the loci in $\cSK$ corresponding to even and odd theta-characteristics of $X_K$. Thus $\tS$ restricts to maps
\[
\tS^+ \col \cSK^+ \longrightarrow \TtG
\quad \text{ and } \quad
\tS^- \col \cSK^-\longrightarrow \TtG.
\]
Similarly, we    restrict $\alpha$  
\[
\nS^+\col  \cSK^+\longrightarrow\cC_G
\;\quad \text{ and } \;\quad
\nS^-\col  \cSK^-\longrightarrow\cC_G
\]

\

Next, let $\mc S_X$ be the scheme of spin curves over $X$. For a spin structure $(P,s)$ on $G$ we define
\[
\mc S_{(X,P,s)}=\{(\wh X,\wh L) \in \mc S_X: \text{the dual spin graph of } (\wh X, \wh L) \text{ is } (G,P,s)\}
\]
Denote by $\mc S_X^+$ and $\mc S_X^-$, respectively, the subschemes of $\mc S_X$ corresponding to even and odd spin curves. 

Recall that $SP_G^+$ and $SP_G^-$ denote, respectively, the set of pairs $(P,s)$ such that $(G,P,s)$ is an even and odd spin graph.  
For every $P\in \cC_G$, we define
\[
\mc S^+_{(X,P)}=\cup_{(P,s) \in SP_G^+} \mc S_{(X,P,s)}
\quad \text{ and }\quad
\mc S^-_{(X,P)}=\cup_{(P,s) \in SP_G^-} \mc S_{(X,P,s)}.
\]

\begin{Lem}\label{lem:ramif}
With the above notation, for every $P\in \cC_G$ we have 
  \[
  |(\nS^\pm)^{-1}(P)|=2^{b_1(G)-b_1(P)}|\mc S_{(X,P)}^\pm|.
\]  
\end{Lem}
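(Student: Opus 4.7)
The strategy is to factor $\nS$ through the spin specialization
\[
\rho\colon \cSK\to\mc S_X,\qquad L_K\longmapsto (\wh X,\wh L),
\]
followed by the forgetful map $\mc S_X\to \cC_G$ that reads off the cyclic part $P$ of the dual spin graph. Since the parity of a theta-characteristic is deformation invariant (\cite{mumford}) and the restriction of $\ov{\mc S}_g$ to each parity component is proper over $\ov{\mc M}_g$, the map $\rho$ preserves parity: $\rho(\cSK^\pm)\subseteq \mc S_X^\pm$. Consequently,
\[
(\nS^\pm)^{-1}(P)\;=\;\bigsqcup_{(\wh X,\wh L)\in\mc S_{(X,P)}^\pm}\rho^{-1}(\wh X,\wh L),
\]
and the lemma reduces to showing that $|\rho^{-1}(\wh X,\wh L)|=2^{b_1(G)-b_1(P)}$ for every $(\wh X,\wh L)\in\mc S_{(X,P)}$.

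By the uniqueness clause in the construction of $\nS$, the semistable model $\wh{\mc X}$ attached to any $L_K\in\rho^{-1}(\wh X,\wh L)$ is determined by $\wh X$: it is the blow-up $\wh{\mc X}_R$ of $\mc X$ at the nodes indexed by $R=E\ssm P$, which depends only on $P$. It follows that $\rho^{-1}(\wh X,\wh L)$ is a torsor under the group
\[
K_P\;:=\;\ker\!\bigl(\Pic^0(X_K)[2]\to\Pic^0(\wh X)[2]\bigr),
\]
where the specialization is computed through the common model $\wh{\mc X}_R$. In particular all fibers over $\mc S_{(X,P)}$ share the common cardinality $|K_P|$.

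To compute $|K_P|$ we count $|\nS^{-1}(P)|$ and $|\mc S_{(X,P)}|$ separately. By Lemma~\ref{lem:factor} and Proposition~\ref{distinctt}, $\nS^{-1}(P)=\tS^{-1}([T_P])$ is a torsor under $\ker(\tP|_{\Pic^0(X_K)[2]})$; since $\tP$ surjects on $2$-torsion onto $\Pic^0(\Gamma)[2]\cong \cC_G$ by standard tropical specialization, this gives $|\nS^{-1}(P)|=2^{2g-b_1(G)}$. On the other hand, a spin curve in $\mc S_{(X,P)}$ is determined by a theta-characteristic on the nodal curve $X^\nu_R$, and
\[
|\Pic^0(X^\nu_R)[2]|\;=\;2^{b_1(P)+2\sum_v w(v)}
\]
via the exact sequence of the generalized Jacobian of $X^\nu_R$ (toric part of rank $b_1(G_P)=b_1(P)$, and compact part of total dimension $\sum_v w(v)$). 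Dividing yields $|K_P|=2^{b_1(G)-b_1(P)}$, as required.

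The main technical point is verifying that $\rho^{-1}(\wh X,\wh L)$ is genuinely a principal homogeneous space under $K_P$: namely, that two theta-characteristics specialize to the same spin curve on $X$ precisely when their difference, as a $2$-torsion element of $\Pic^0(X_K)$, has trivial specialization to $\Pic^0(\wh X)$ through $\wh{\mc X}_R$. This is the place where the analysis of the shared semistable model and the tropical specialization of $2$-torsion must be made rigorous; once this is in hand, the surjectivity of $\tP$ on $2$-torsion and the counting above complete the proof.
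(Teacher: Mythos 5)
Your opening move coincides with the paper's: both arguments decompose $(\nS^\pm)^{-1}(P)$ over the spin curves in $\mc S^\pm_{(X,P)}$ and reduce the lemma to showing that every $(\wh X,\wh L)\in \mc S^\pm_{(X,P)}$ is the limit of exactly $2^{b_1(G)-b_1(P)}$ theta-characteristics of $X_K$. At precisely that point, however, the paper invokes a structural fact that your proposal replaces with an unproven claim: the moduli space $\pi\col S_{\cX}\ra \Spec R$ of spin curves of the family $\cX$ is finite and flat, and by \cite{C89} and \cite{CC03} its special fiber has multiplicity $2^{b_1(G)-b_1(P)}$ at each point of $\mc S_{(X,P)}$; since the generic fiber consists of the $2^{2g}$ reduced points of $\cSK$ (as $\chara k \neq 2$), flatness immediately yields both the fiber count $N(\wh X,\wh L)=2^{b_1(G)-b_1(P)}$ and the non-emptiness of every fiber. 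You substitute for this the assertion that $\rho^{-1}(\wh X,\wh L)$ is a torsor under $K_P$, and you yourself flag that assertion as the unverified ``main technical point.'' It is not a formality: it is the entire content of the lemma. Your auxiliary counts are correct ($|\nS^{-1}(P)|=2^{2g-b_1(G)}$ from surjectivity of $\tP$ on $2$-torsion, and $|\mc S_{(X,P)}|=2^{2(g-b_1(G))+b_1(P)}$ by counting square roots on $X^\nu_R$), but they only determine what $|K_P|$ would have to be \emph{if} the torsor structure held; without constancy and non-emptiness of the fibers of $\rho$ over each parity piece, the division argument gives at best an average, and the parity-refined statement does not follow from the totals.

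Worse, the group $K_P$ is not well defined as written, and the natural ways of making it precise give the wrong answer. An extension of a degree-zero line bundle from $X_K$ to $\wh{\cX}_R$ is unique only up to twists by components of the special fiber, so ``specialization to $\Pic^0(\wh X)$'' is canonical only modulo twisters, i.e.\ at the level of the special fiber of the N\'eron model of the Jacobian, where the component group intervenes. But in residue characteristic different from $2$ the kernel of reduction to the N\'eron special fiber contains no $2$-torsion, so any literal kernel of specialization on $\Pic^0(X_K)[2]$ is trivial --- it cannot have order $2^{b_1(G)-b_1(P)}$. The true mechanism producing coincident limits is different: the model $\wh{\cX}_R$ is not regular at its nodes (local equations $xy=h_e$ with $\val_K(h_e)$ large), so restriction of line bundles through it identifies $2$-torsion classes whose N\'eron reductions lie in distinct components; quantifying that identification is exactly the local multiplicity analysis of \cite{C89}, \cite{CC03} and \cite{CCC07} that the paper cites, or, alternatively, the Weil-pairing analysis of \cite{JL} that the paper explicitly set out to avoid. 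So making your torsor claim rigorous would not be an elementary shortcut but a reconstruction of the very input the paper's one-line citation supplies; as it stands, the proposal has a genuine gap at its core, compounded by a definition of $K_P$ that does not withstand scrutiny.
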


\begin{proof}
As we saw above, for every theta-characteristic $L_K$ on $X_K$ there is a unique pair $(\wh{\cX},\wh{\mc L})$ over $\Spec R$ extending $(X_K,L_K)$. By construction, the special fiber of  $(\wh{\cX},\wh{\mc L})$ is contained in $\mc S_X$. For every $(\wh X, \wh L)\in \mc S_X$, we let $N(\wh X, \wh L)$ be the number of theta-characteristics $L_K$ on $X_K$ whose extension $(\wh{\mc X},\wh{\mc L})$  has $(\wh X,\wh L)$ as a special fiber. Clearly, we have
\[
|(\nS^\pm)^{-1}(P)|=\sum_{(\wh X, \wh L)\in \mc S_{(X,P)}^\pm} N(\wh X, \wh L).
\]

Let $\pi\col S_{\cX}\ra \Spec R$ be the moduli space of spin curves of the family $\cX\ra \Spec R$. The map $\pi$ is finite and flat and  the multiplicity of the special fiber of $\pi$ at a point $(\wh X, \wh L)\in \mc S_{(X,P)}^\pm$ is  $2^{b_1(G)-b_1(P)}$ (see \cite{C89} and \cite{CC03}). Thus $\wh L$ lifts to  $2^{b_1(G)-b_1(P)}$ theta-characteristics on $X_K$, that is, $N(\wh X, \wh L)=2^{b_1(G)-b_1(P)}$, and we are done.
\end{proof}

Given an integer $m$, we set
\[
 N^+_m:=2^{m-1}(2^m+1) \quad \text{ \; and \; } \quad N^-_m:=2^{m-1}(2^m-1).
\]

\begin{Thm}\label{thm:main2}
For every $P\in \cC_G$, the following properties hold. 
\begin{enumerate}
\item
\label{thm:main2a} If $P\ne0$, then 
\[
|(\tS^+)^{-1}(T_P)|=|(\tS^-)^{-1}(T_P)|=2^{2g-b_1(G)-1}.
\]

\item
\label{thm:main2b} 
If $P=0$, then 
\[
|(\tS^+)^{-1}(T_0)|=2^{b_1(G)}\underset{|U|\equiv 0\mod  (2)}{\sum_{ U\subset V}}\left(\prod_{v\in U} N^-_{w(v)}\prod_{v\in V\setminus U} N^+_{w(v)}\right).
\]
and
 \[
|(\tS^-)^{-1}(T_0)|=2^{b_1(G)}\underset{|U|\equiv 1\mod  (2)}{\sum_{ U\subset V}}\left(\prod_{v\in U} N^-_{w(v)}\prod_{v\in V\setminus U} N^+_{w(v)}\right).
\]
\end{enumerate}
\end{Thm}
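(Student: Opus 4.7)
The plan is to combine Lemmas~\ref{lem:factor} and~\ref{lem:ramif}, which reduce the count to an enumeration of algebraic spin curves on $X$, with a decomposition of such spin curves along the partial normalization $X^\nu_R$ (where $R=E\setminus P$), and then to exploit the parity enumeration of spin graphs developed in~\cite{CMP1}.

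From Lemma~\ref{lem:factor} we have $(\tS^{\pm})^{-1}(T_P)=(\nS^{\pm})^{-1}(P)$, and Lemma~\ref{lem:ramif} gives
$|(\tS^{\pm})^{-1}(T_P)|=2^{b_1(G)-b_1(P)}\,|\mc S_{(X,P)}^{\pm}|$,
so the proof reduces to computing $|\mc S_{(X,P)}^{\pm}|$. Writing $X^\nu_R=\bigsqcup_{v\in V(G/P)} Y_v$, any spin curve $(\wh X,\wh L)\in\mc S_{(X,P)}$ is determined, up to isomorphism, by the system of theta-characteristics $L_v:=\wh L|_{Y_v}$ on its components, since on each exceptional component $E\cong\mathbb P^1$ the sections of $\wh L|_E=\mc O(1)$ realize any prescribed pair of fiber values uniquely. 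The same fact yields $h^0(\wh X,\wh L)=\sum_v h^0(Y_v,L_v)$, whence the parity of $(\wh X,\wh L)$ equals $\sum_v s(v)\bmod 2$ with $s(v):=h^0(Y_v,L_v)\bmod 2$. Writing $n^\pm_v$ for the number of theta-characteristics of $Y_v$ of parity $\pm$, a standard generating-function argument gives
\[
|\mc S_{(X,P)}^{+}|\pm|\mc S_{(X,P)}^{-}| \;=\; \prod_{v\in V(G/P)}(n^{+}_v\pm n^{-}_v).
\]

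Part (b) then follows at once: $P=0$ forces each $Y_v$ to be the smooth component of genus $w(v)$, so $n^\pm_v=N^\pm_{w(v)}$, and enumerating sign functions $s$ by the set $U:=\{v:s(v)=1\}$ of prescribed parity and multiplying by $2^{b_1(G)-b_1(0)}=2^{b_1(G)}$ yields the formula. For part (a), $P\ne 0$ implies $b_1(P_{v_0})\ge 1$ for some $v_0$, making $Y_{v_0}$ a connected nodal curve with a nontrivial cycle in its dual graph. The key combinatorial input---which I would extract from the spin-graph parity enumeration in~\cite{CMP1}---is that on any such $Y_{v_0}$ the numbers of even and odd theta-characteristics coincide: $n^{+}_{v_0}=n^{-}_{v_0}$. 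Granting this, $\prod_v(n^{+}_v-n^{-}_v)=0$, so $|\mc S_{(X,P)}^{+}|=|\mc S_{(X,P)}^{-}|$, and their common value is $\tfrac12\prod_v|\Pic^0(Y_v)[2]|=\tfrac12\cdot 2^{b_1(P)+2(g-b_1(G))}$, using $\sum_v b_1(P_v)=b_1(P)$ and $\sum_{u}w(u)=g-b_1(G)$. Multiplying by $2^{b_1(G)-b_1(P)}$ yields $2^{2g-b_1(G)-1}$, as desired.

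The main obstacle is the lemma that even and odd theta-characteristics are equinumerous on a connected nodal curve whose dual graph has nontrivial first homology. A direct algebraic-geometric proof (e.g., via the Weil pairing, as in Jensen--Len's approach) is possible but somewhat delicate; the advantage of our framework is that this equality is a natural consequence of the combinatorial description of spin-graph parities in~\cite{CMP1}, which tracks the boundary stratification of Cornalba's moduli space $\overline{\mc S_g}$, making the whole argument notably simpler than the previous approaches.
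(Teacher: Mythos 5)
Your proposal follows essentially the same route as the paper's proof: reduce via Lemmas \ref{lem:factor} and \ref{lem:ramif} to counting $|\mc S_{(X,P)}^{\pm}|$, decompose a spin curve into theta-characteristics on the connected components of $X^\nu_R$ with additivity of $h^0$ across exceptional components, enumerate directly when $P=0$, and for $P\ne0$ invoke equinumerosity of even and odd theta-characteristics on a component whose dual graph has $b_1\ge1$. The only divergence is in sourcing the two numerical inputs: the paper settles your ``main obstacle'' by citing \cite[Cor.~2.13]{Har82} (which is the right reference --- the spin-graph parity enumeration of \cite{CMP1} is purely combinatorial and does not by itself give this algebro-geometric equinumerosity on a fixed nodal curve) and quotes the common value from \cite[Sect.~1.3]{CC03}, whereas you derive it by halving $\prod_v|\Pic^0(Y_v)[2]|$; your arithmetic $\tfrac12\cdot 2^{b_1(P)+2(g-b_1(G))}\cdot 2^{b_1(G)-b_1(P)}=2^{2g-b_1(G)-1}$ agrees with the paper's.
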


\begin{proof}
Consider   $(\wh X,\wh L)\in S_{(X,P)}$. 
The dual graph of $\wh X$ is the $R$-subdivision of $G$, where $R=E\ssm P$.   We let $Z\subset \wh X$ be the union of the exceptional components, and $X_1,\dots,X_n$   the connected components of $\overline{\wh X\setminus Z}$. It is well known (as the restriction of $\wh L$ to every exceptional component is $\cO(1)$) that we have the following identity
\begin{equation}\label{eq:sum}
h^0(\wh X, \wh L)=\sum_{1\le i\le n} h^0(X_i,\wh L|_{X_i}).
\end{equation}

Assume   $P\ne0$.  We can assume that the dual graph $P_1$ of $X_1$ has $b_1(P_1)\ne0$. By  \cite[Cor. 2.13]{Har82}, we have the same number, $M$, of odd and even theta-characteristics on $X_1$.  We argue that $|\mc S_{(X,P)}^+|=|\mc S_{(X,P)}^-|$.  By \eqref{eq:sum}, this is clear if $n=1$.
Otherwise, let $Y$ be the disjoint union of $X_2,\dots,X_n$, and denote by  $A$ and $B$ the number of even and odd theta-characteristics on $Y$, respectively. We have
$h^0(\wh X, \wh L)= h^0(X_1,\wh L|_{X_1})+h^0(Y,\wh L|_{Y})$, therefore
$$|\mc S_{(X,P)}^+|=|\mc S_{(X,P)}^-|=AM+BM.$$
Therefore, by   \cite[Sect. 1.3]{CC03},
\[
|\mc S_{(X,P)}^+|=|\mc S_{(X,P)}^-|=2^{2\sum_{v\in V}w(v)}2^{b_1(P)-1}=2^{2g-2b_1(G)}2^{b_1(P)-1}.
\]
Then  by Lemmas \ref{lem:factor} and \ref{lem:ramif} we have
\[
|(\tS^\pm)^{-1}(T_P)| = 2^{b_1(G)-b_1(P)}|\mc S_{(X,P)}^\pm| =  2^{2g-b_1(G)-1}.
\]
This concludes the proof of \eqref{thm:main2a}.

Assume now $P=0$. A spin curve in $\mc S_{(X,0)}$ is given by the datum of a theta-characteristic  on each component of the normalization of $X$. This spin curve is even if and only if we have an even number of components for which the theta-characteristic is odd. Hence  
\[
|\mc S_{(X,0)}^+|=\underset{|U|\equiv 0\mod  (2)}{\sum_{ U\subset V}}\left(\prod_{v\in U} N^-_{w(v)}\prod_{v\in V\setminus U} N^+_{w(v)}\right)
\]
which, by Lemmas \ref{lem:factor} and \ref{lem:ramif}, gives the stated number for $|(\tS^+)^{-1}(T_0)|$. A similar reasoning gives the stated number for $|(\tS^-)^{-1}(T_0)|$.
\end{proof}

\begin{rem}
Notice that Theorem \ref{thm:main2} recovers \cite[Theorem 1.1, (2) and (3)]{JL}. 
Indeed, assume that $\Gamma$ is pure, so that $b_1(\Gamma)=g$. For every $P\ne0$, by Theorem \ref{thm:main2} we get 
\[
|(\tS^+)^{-1}(T_P)|=|(\tS^-)^{-1}(T_P)|=2^{g-1}.
\]
Moreover, for $\epsilon\in\{0,1\}$, using that $N^+_0=1$ and $N^-_0=0$, we have
\[
\underset{|U|\equiv\epsilon \mod  (2)}{\sum_{ U\subset V}}\left(\prod_{v\in U} N^-_0\prod_{v\in V\setminus U} N^+_0\right)=
\begin{cases}
\begin{array}{ll}
1, & \text{ if } \epsilon=0, \\
0, & \text{ if } \epsilon=1,
\end{array}
\end{cases}
\]
(the unique non-zero summand in the left hand side is the one corresponding to $\epsilon=0$ and $U=\emptyset$). By Theorem \ref{thm:main2},
\[
|(\tS^+)^{-1}(T_0)|=2^g
\quad \text{ and } \quad 
|(\tS^-)^{-1}(T_0)|=0.
\]
\end{rem}

\end{document}